\newtheorem{thm}{Theorem}[section]
\newtheorem{cor}[thm]{Corollary}
\newtheorem{lem}[thm]{Lemma}
\newtheorem{exm}[thm]{Example}
\newtheorem{prop}[thm]{Proposition}
\theoremstyle{definition}
\newtheorem{defn}[thm]{Definition}
\theoremstyle{remark}
\numberwithin{equation}{section}
\begin{document}
\title[The Auslander bijections and universal extensions]
{The Auslander bijections and universal extensions}

\author[Xiao-Wu Chen] {Xiao-Wu Chen}

%\thanks{$^*$ The corresponding author}
\subjclass[2010]{16G70, 16G10, 13D07}
\date{the revised version, \today}

%\thanks{E-mail:
%xwchen$\symbol{64}$mail.ustc.edu.cn}
%\thanks{this is NOT for distribution}

\keywords{universal extension, determined morphism, Auslander-Reiten duality, Auslander bijection}%

\maketitle

\dedicatory{}%
\commby{}%
%\begin{center}
%\end{center}

\begin{abstract}
Universal extensions arise naturally in the Auslander bijections. For an abelian category having Auslander-Reiten duality, we exploit a bijection triangle, which involves the Auslander bijections, universal extensions and the Auslander-Reiten duality. Some consequences are given, in particular, a conjecture by Ringel is verified.
\end{abstract}

\section{Introduction}

The theory of morphisms determined by objects  is initiated in \cite{Aus1, Aus2}, which puts the Auslander-Reiten theory in a more general context. The Auslander bijections, termed in \cite{Rin2}, play a central role in this general theory. Roughly speaking, the Auslander bijections reduce the study of morphisms to submodules, where the latter has a  geometric feature via the Grassmannians of submodules; see \cite{Rin3}.

In what follows, we explain the setting of the Auslander bijections. Let $\mathcal{A}$ be an additive category, which is assumed to be skeletally small.

 Let $C$ and $Y$ be two objects in $\mathcal{A}$. A morphism $\alpha\colon X\rightarrow Y$ is \emph{right $C$-determined}  if the following holds: if $t\colon T\rightarrow Y$ is any morphism satisfying that $t\circ \phi$ factors through $\alpha$ for each morphism $\phi\colon C\rightarrow T$, then $t$ factors through $\alpha$.

 Two morphisms $\alpha_1\colon X_1\rightarrow Y$ and $\alpha_2\colon X_2\rightarrow Y$ are \emph{right equivalent} if $\alpha_1$ factors through $\alpha_2$ and $\alpha_2$ factors through $\alpha_1$. This is an equivalence relation on the set of all morphisms to $Y$, and the right equivalence class of a morphism $\alpha\colon X\rightarrow Y$ is denoted by $[\alpha\rangle$. Then  we have the poset $[\longrightarrow Y\rangle$ of right equivalence classes of morphisms to $Y$. Here, ${[\alpha\rangle} \leq  {[\alpha'\rangle}$ provided that $\alpha$ factors through $\alpha'$.

  We observe that if $\alpha$ and $\alpha'$ are right equivalent, then $\alpha$ is right $C$-determined if and only if so is $\alpha'$; so it makes sense to say that the corresponding right equivalence class is right $C$-determined. Following \cite{Rin2}, we denote by $^C[\longrightarrow Y\rangle$ the subset of $[\longrightarrow Y\rangle$ consisting of right equivalence classes that are right $C$-determined. This poset is central in this theory; see \cite{Rin2}.

 Set $\Gamma(C)={\rm End}_\mathcal{A}(C)$ to be the endomorphism ring of $C$, and denote by $\Gamma(C)^{\rm op}$ its opposite ring. A morphism $\alpha\colon X\rightarrow Y$ in $\mathcal{A}$ induces a morphism of left $\Gamma(C)^{\rm op}$-modules ${\rm Hom}_\mathcal{A}(C, \alpha)\colon {\rm Hom}_\mathcal{A}(C, X)\rightarrow {\rm Hom}_\mathcal{A}(C, Y)$. Its image ${\rm Im}\; {\rm Hom}_\mathcal{A}(C, \alpha)$ is a $\Gamma(C)^{\rm op}$-submodule of ${\rm Hom}_\mathcal{A}(C, Y)$. Observe that if $\alpha_1$ and $\alpha_2$ are right equivalent, then ${\rm Im}\; {\rm Hom}_\mathcal{A}(C, \alpha_1)={\rm Im}\; {\rm Hom}_\mathcal{A}(C, \alpha_2)$.

 We denote by ${\rm Sub}_{\Gamma(C)^{\rm op}}\; {\rm Hom}_\mathcal{A}(C, Y)$ the poset formed by $\Gamma(C)^{\rm op}$-submodules of ${\rm Hom}_\mathcal{A}(C, Y)$, ordered by  the inclusion. Then the following map is well defined
 \begin{align*}
 \eta_{C, Y}\colon [\longrightarrow Y\rangle \longrightarrow {\rm Sub}_{\Gamma(C)^{\rm op}}\; {\rm Hom}_\mathcal{A}(C, Y), \quad [\alpha\rangle \mapsto {\rm Im}\; {\rm Hom}_\mathcal{A}(C, \alpha).
 \end{align*}

 The restriction of $\eta_{C, Y}$ on $^C[\longrightarrow Y\rangle$ is injective and reflects the orders, that is, for two classes $[\alpha\rangle$ and $[\alpha'\rangle$ that are right $C$-determined, $[\alpha\rangle \leq  [\alpha'\rangle$ if and only if $\eta_{C, Y}([\alpha\rangle)\subseteq \eta_{C, Y}([\alpha'\rangle)$; see \cite[Proposition 4.3]{Rin2}.

We say that  the \emph{Auslander bijection} at $Y$ relative to $C$ holds,  provided that the following map
\begin{align}
 \eta_{C, Y} \colon ^C[\longrightarrow Y\rangle \longrightarrow {\rm Sub}_{\Gamma(C)^{\rm op}}\; {\rm Hom}_\mathcal{A}(C, Y)
 \end{align}
is surjective, or equivalently, it is an isomorphism of posets.

 One of  the fundamental results in the representation theory of artin algebras claims that  if $\mathcal{A}$ is the category of finitely generated modules over an artin algebra, the Auslander bijection at any module holds \cite{ARS, Rin1, Rin2}. This is extended to dualizing varieties over a commutative artinian ring in \cite{Kr}; see also \cite{ChL}.  However, if $\mathcal{A}$ is an abelian category with Serre duality, the Auslander bijection may fail, in which case we have to restrict to epimorphisms; see \cite{ChL}.

Let $k$ be a commutative artinian ring. In what follows, $\mathcal{A}$ is a Hom-finite $k$-linear abelian category having \emph{Auslander-Reiten duality} with  $\tau$ the \emph{Auslander-Reiten translation} \cite{LZ}. The main examples in the representation theory are the category of finitely generated modules  over an artin algebra and the category of coherent sheaves on a weighted projective line.

We denote by $\mathcal{P}$ the ideal of $\mathcal{A}$ formed by \emph{projectively trivial} morphisms, and denote by $\underline{\rm Hom}_\mathcal{A}(C, Y)={\rm Hom}_\mathcal{A}(C, Y)/\mathcal{P}(C, Y)$. Here, we recall that a morphism $f\colon C\rightarrow Y$ is projectively trivial if it factors through any epimorphism ending at $Y$. Any $\Gamma(C)^{\rm op}$-submodule of $\underline{\rm Hom}_\mathcal{A}(C, Y)$ corresponds to a unique $\Gamma(C)^{\rm op}$-submodule of ${\rm Hom}_\mathcal{A}(C, Y)$  that contains $\mathcal{P}(C, Y)$.  Then the poset ${\rm Sub}_{\Gamma(C)^{\rm op}}\; \underline{\rm Hom}_\mathcal{A}(C, Y)$ is viewed as a subset of ${\rm Sub}_{\Gamma(C)^{\rm op}}\; {\rm Hom}_\mathcal{A}(C, Y)$.

 We denote by $[\longrightarrow Y\rangle_{\rm epi}$ the subset of $[\longrightarrow Y\rangle$ formed by epimorphisms $\alpha\colon X\rightarrow Y$. Observe that $\mathcal{P}(C, Y)\subseteq {\rm Im}\; {\rm Hom}_\mathcal{A}(C, \alpha)$. Then we have the following map
 \begin{align*}
 \eta_{C, Y} \colon [\longrightarrow Y\rangle_{\rm epi} \longrightarrow {\rm Sub}_{\Gamma(C)^{\rm op}}\; \underline{\rm Hom}_\mathcal{A}(C, Y), \quad [\alpha\rangle \mapsto {\rm Im}\; {\rm Hom}_\mathcal{A}(C, \alpha)/{\mathcal{P}(C, Y)}.
 \end{align*}

 Set $^C[\longrightarrow Y\rangle_{\rm epi}=[\longrightarrow Y\rangle_{\rm epi}\cap {^C[\longrightarrow Y\rangle}$. We say that the \emph{restricted Auslander bijection} at $Y$ relative to $C$ holds,  provided that the following map
\begin{align}
 \eta_{C, Y} \colon ^C[\longrightarrow Y\rangle_{\rm epi} \longrightarrow {\rm Sub}_{\Gamma(C)^{\rm op}}\; \underline{\rm Hom}_\mathcal{A}(C, Y)
 \end{align}
 is surjective, or equivalently, it is an isomorphism of posets.

The main result claims that a bijection triangle is commutative, which involves the restricted Auslander bijections, universal extensions and the Auslander-Reiten duality. In particular,  the restricted Auslander bijection  always holds in an abelian category having Auslander-Reiten duality.

\vskip 10pt

 \noindent {\bf Theorem.} \emph{Let $\mathcal{A}$ be a Hom-finite $k$-linear abelian category having Auslander-Reiten duality, and let $C, Y$ be objects in $\mathcal{A}$. Set $K=\tau C$. Then the following bijection triangle is commutative
\[\xymatrix @R=1.5pc @C=0.8pc{
&  {{\rm Sub}_{\Gamma(C)^{\rm op}} \; \underline{\rm Hom}_\mathcal{A}(C, Y)}  \\
{^C[\longrightarrow Y\rangle_{\rm epi}}  \ar[ru]^{\eta_{C,Y}}  \ar[rr]^{\delta_{K, Y}}&& {\rm Sub}_{\Gamma(K)}\; {\rm Ext}^1_\mathcal{A}(Y, K) \ar[lu]_{\gamma_{K, Y}},
}\]
where $\gamma_{K, Y}$ is an anti-isomorphism of posets  which is induced by the Auslander-Reiten duality,  and $\delta_{K, Y}$ is an anti-isomorphism of posets whose inverse assigns to each $\Gamma(K)$-submodule $L$ of ${\rm Ext}_\mathcal{A}^1(Y, K)$ the epimorphism in the corresponding universal $L$-extension.}

\vskip 10pt

We mention that the idea of a bijection triangle is implicit in the proof of the Auslander bijections for module categories; see \cite{Aus1, Aus2} and \cite[Chapter X]{ARS}. We are inspired by the comparison between the Auslander bijections and the Auslander-Reiten theory in \cite[Section 10]{Rin2}. Universal extensions of modules are widely used to construct certain modules in  the representation theory of artin algebras. Here, we make them explicit in an arbitrary abelian category.

The paper is structured as follows.  Semi-universal extensions and universal extensions are studied in Sections 2 and 3, where the bijection $\delta_{K, Y}$ is established in Proposition \ref{prop:delta} and the  notion of universal extension is given in Definition \ref{defn:uni-ext}.  The bijection triangle is proved in Section 4; see Theorem \ref{thm:1}. Some consequences of the bijection triangle are given in Section 5. In particular, a conjecture by Ringel in \cite{Rin2} is verified; see Proposition \ref{prop:Ringel} and Corollary \ref{cor:Ringel}.

\section{Semi-universal extensions}

In this section, we study semi-universal extensions in an arbitrary abelian category. We obtain a bijection, which relates the poset of right equivalence classes of morphisms to the poset of finitely generated submodules of the Ext module.

Throughout $\mathcal{A}$ is an abelian category, which is skeletally small. For an object $K$, denote by $\Gamma(K)={\rm End}_\mathcal{A}(K)$ its endomorphism ring. Then for an object $Y$, ${\rm Hom}_\mathcal{A}(Y, K)$ and ${\rm Ext}_\mathcal{A}^1(Y, K)$ carry  natural left $\Gamma(K)$-module structures. For an exact sequence $\xi\colon 0\rightarrow K \rightarrow X\rightarrow Y\rightarrow 0$, we denote by $[\xi]$ the corresponding element in ${\rm Ext}^1_\mathcal{A}(Y, K)$.

Let $\xi\colon 0\rightarrow K \rightarrow X\rightarrow Y\rightarrow 0$ be an exact sequence and $Z$ be an object. We consider the \emph{connecting map}
$$c(\xi, Z)\colon {\rm Hom}_\mathcal{A}(K, Z)\longrightarrow {\rm Ext}_\mathcal{A}^1(Y, Z)$$
 sending $u\colon K\rightarrow Z$ to $[u.\xi]$, where $u.\xi$ denotes the pushout of $\xi$ along $u$. The connecting map $c(\xi, Z)$ is a morphism of $\Gamma(Z)$-modules, and thus its image ${\rm Im}\; c(\xi, Z)$ is a $\Gamma(Z)$-submodule of ${\rm Ext}_\mathcal{A}^1(Y, Z)$.

For an object $K$, denote by ${\rm add}\; K$ the full subcategory of $\mathcal{A}$ formed by direct summands of finite direct sums of $K$. Recall
that
$${\rm Hom}_\mathcal{A}(-, K)\colon {\rm add} \;K\longrightarrow \Gamma(K)\mbox{-proj}$$
is a duality of categories, which sends $K$ to the regular module $\Gamma(K)$. Here, for any ring $R$, $R\mbox{-proj}$ denotes the category of finitely generated projective left $R$-modules.

The following two facts are standard.

\begin{lem}\label{lem:ext}
Let $K, K'$ and $Y$ be objects in $\mathcal{A}$ with $K'\in {\rm add}\; K$. Then the following statements hold:
\begin{enumerate}
\item there is a natural isomorphism
\begin{align*}
{\rm Hom}_\mathcal{A}(Y, K')\stackrel{\sim}\longrightarrow {\rm Hom}_{\Gamma(K)} ({\rm Hom}_\mathcal{A}(K', K), {\rm Hom}_\mathcal{A}(Y, K))
\end{align*}
sending $u\colon Y\rightarrow K'$ to  ${\rm Hom}_\mathcal{A}(u, K)$;
\item there is a natural isomorphism
\begin{align*}
{\rm Ext}_\mathcal{A}^1(Y, K')\stackrel{\sim}\longrightarrow {\rm Hom}_{\Gamma(K)} ({\rm Hom}_\mathcal{A}(K', K), {\rm Ext}_\mathcal{A}^1(Y, K))
\end{align*}
sending $[\xi]\in {\rm Ext}^1_\mathcal{A}(Y, K')$ to the connecting map $c(\xi, K)$.
\end{enumerate}\end{lem}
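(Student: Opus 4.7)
The plan is to reduce both statements to the case $K'=K$ by a direct-summand argument, exploiting that both sides are additive covariant functors of $K'\in\mathrm{add}\,K$.

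First, I would check that the proposed maps are well defined $\Gamma(K)$-module homomorphisms, and that they are natural in $K'$ (and in $Y$). For (1), the left $\Gamma(K)$-module structures come from post-composition with elements of $\Gamma(K)$; the identity $(f\circ h)\circ u = f\circ (h\circ u)$ shows that $\mathrm{Hom}_\mathcal{A}(u,K)$ is $\Gamma(K)$-linear. For (2), the connecting map $c(\xi,K)$ is $\Gamma(K)$-linear because pushout is compatible with composition, i.e.\ $[(f\circ u).\xi]=[f.(u.\xi)]$ for $f\in\Gamma(K)$ and $u\colon K'\to K$. Naturality in $K'$ amounts to the compatibility of pushout/pullback with composing morphisms on $K'$ and is immediate.

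Second, I would verify the case $K'=K$ directly. In (1), the right-hand side is $\mathrm{Hom}_{\Gamma(K)}(\Gamma(K),\mathrm{Hom}_\mathcal{A}(Y,K))$; the inverse is evaluation at $1_K$. Indeed, if $u\colon Y\to K$ then $\mathrm{Hom}_\mathcal{A}(u,K)(1_K)=1_K\circ u=u$, and conversely any $\Gamma(K)$-linear map out of $\Gamma(K)$ is determined by its value at $1_K$. In (2), evaluation at $1_K$ is again the inverse: $c(\xi,K)(1_K)=[1_K.\xi]=[\xi]$, and $\Gamma(K)$-linearity of any $\phi\colon\Gamma(K)\to\mathrm{Ext}^1_\mathcal{A}(Y,K)$ pins it down by $\phi(1_K)$.

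Third, both functors are additive on $\mathrm{add}\,K$. On the left, $\mathrm{Hom}_\mathcal{A}(Y,-)$ and $\mathrm{Ext}^1_\mathcal{A}(Y,-)$ preserve finite direct sums in an abelian category. On the right, $\mathrm{Hom}_\mathcal{A}(-,K)$ turns finite direct sums into direct sums of finitely generated projective $\Gamma(K)$-modules, and $\mathrm{Hom}_{\Gamma(K)}(-,M)$ in turn sends those direct sums to direct sums; so $\mathrm{Hom}_{\Gamma(K)}(\mathrm{Hom}_\mathcal{A}(-,K),M)$ is additive in its first argument. Combining the previous step with additivity, the natural transformation is an isomorphism on $K^n$ for every $n$, and hence, by taking direct summands, on every $K'\in\mathrm{add}\,K$. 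The only mildly delicate point is the bookkeeping for (2) — checking that the connecting map is $\Gamma(K)$-linear and that $\mathrm{Ext}^1_\mathcal{A}(Y,-)$ is additive — but both are standard, so I expect no real obstacle.
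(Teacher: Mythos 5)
Your proposal is correct and follows essentially the same route as the paper: the paper subsumes both statements under the general fact that for any additive functor $F$ on ${\rm add}\,K$ one has $F(K')\cong {\rm Hom}_{\Gamma(K)}({\rm Hom}_\mathcal{A}(K',K),F(K))$ via Yoneda, and your argument (check the case $K'=K$ by evaluation at ${\rm Id}_K$, then extend by additivity and naturality to $K^n$ and its direct summands) is precisely the standard proof of that fact, specialized to $F={\rm Hom}_\mathcal{A}(Y,-)$ and $F={\rm Ext}^1_\mathcal{A}(Y,-)$. No gaps.
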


\begin{proof}
Both isomorphisms are special cases of the following well-known fact: for an additive functor $F$ from ${\rm add}\; K$ to the category of abelian groups, there is an isomorphism
\begin{align*}
F(K')\stackrel{\sim}\longrightarrow {\rm Hom}_{\Gamma(K)} ({\rm Hom}_\mathcal{A}(K', K), F(K))
\end{align*}
sending $a\in F(K')$ to a $\Gamma(K)$-module morphism that sends $f\colon K'\rightarrow K$ to $F(f)(a)\in F(K)$.
\end{proof}

\begin{lem}\label{lem:factor}
Let $K, Y$ be two objects in the abelian category $\mathcal{A}$. Consider two exact sequences $\xi_1\colon 0\rightarrow K_1\rightarrow X_1\stackrel{\alpha_1}\rightarrow Y\rightarrow 0$ and
$\xi_2\colon 0\rightarrow K_2\rightarrow X_2\stackrel{\alpha_2}\rightarrow Y\rightarrow 0$, and  the following statements:
\begin{enumerate}
\item there is a morphism $v\colon X_1\rightarrow X_2$ such that $\alpha_1=\alpha_2\circ v$;
\item there is a morphism $u\colon K_1\rightarrow K_2$ such that $[\xi_2]=[u.\xi_1]$;
\item ${\rm Im}\; c(\xi_2, K)\subseteq {\rm Im}\; c(\xi_1, K)$.
\end{enumerate}
Then we have $(1)\Leftrightarrow (2)\Rightarrow(3)$. Consequently, if $\alpha_1$ and $\alpha_2$ are right equivalent, then ${\rm Im}\; c(\xi_1, K)= {\rm Im}\; c(\xi_2, K)$.

Moreover,  $K_2\in {\rm add}\; K$ implies $(3)\Rightarrow (2)$. Consequently, if both $K_1$ and $K_2$ lie in ${\rm add}\; K$, then we have that $\alpha_1$ and $\alpha_2$ are right equivalent if and only if  ${\rm Im}\; c(\xi_1, K)= {\rm Im}\; c(\xi_2, K)$.
\end{lem}

\begin{proof}
The implications ``$(1)\Leftrightarrow (2)$" and ``$(2)\Rightarrow (3)$" are direct.

For ``$(3)\Rightarrow (2)$" in the case that $K_2\in {\rm add}\; K$, we consider the two epimorphisms ${\rm Hom}_\mathcal{A}(K_1, K)\rightarrow {\rm Im}\; c(\xi_1, K)$ and ${\rm Hom}_\mathcal{A}(K_2, K)\rightarrow {\rm Im}\; c(\xi_2, K)$ of $\Gamma(K)$-modules.  We apply the projectivity of the $\Gamma(K)$-module ${\rm Hom}_\mathcal{A}(K_2, K)$ and the condition ${\rm Im}\; c(\xi_2, K)\subseteq {\rm Im}\; c(\xi_1, K)$. Then  there exists a morphism $w\colon {\rm Hom}_\mathcal{A}(K_2, K)\rightarrow {\rm Hom}_\mathcal{A}(K_1, K)$ such that $c(\xi_2, K)=c(\xi_1, K)\circ w$. By Lemma \ref{lem:ext}(1), there exists a unique morphism $u\colon K_1\rightarrow K_2$ with $w={\rm Hom}_\mathcal{A}(u, K)$. Then we obtain  $c(\xi_2, K)=c(\xi_1, K)\circ {\rm Hom}_\mathcal{A}(u, K)$. It implies that for any morphism $x\colon K_2\rightarrow K$ we have $[x.\xi_2]=[x.(u.\xi_1)]$.

We claim that $[\xi_2]=[u.\xi_1]$. Indeed, since $K_2\in {\rm add}\; K$, there exist morphisms $x_i\colon K_2\rightarrow K$ and $y_i\colon K\rightarrow K_2$, $1\leq i\leq n$, such that ${\rm Id}_{K_2}=\sum_{i=1}^ny_i\circ x_i$. Then we have
\begin{align*}
[\xi_2]=[{\rm Id}_{K_2}.\xi]=\sum_{i=1}^n[y_i.(x_i.\xi_2)]=\sum_{i=1}^n[y_i.(x_i.(u.\xi_1))]=[u.\xi_1].
\end{align*}
The remaining statements are immediate consequences.
\end{proof}

The following existence result is the basis of our treatment.

\begin{prop}\label{prop:semi-uni}
Let $K, Y$ be objects in $\mathcal{A}$. Then for any finitely generated $\Gamma(K)$-submodule $L\subseteq {\rm Ext}_\mathcal{A}^1(Y, K)$, there exists an exact sequence $\xi\colon 0\rightarrow K_1\rightarrow X\stackrel{\alpha}\rightarrow Y\rightarrow 0$ with $K_1\in {\rm add}\; K$ and ${\rm Im}\; c(\xi, K)=L$.
\end{prop}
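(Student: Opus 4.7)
The plan is to construct the desired exact sequence as a ``diagonal'' combination of extensions representing a chosen finite generating set of $L$. Since $L$ is a finitely generated $\Gamma(K)$-submodule of ${\rm Ext}^1_\mathcal{A}(Y, K)$, pick generators $[\xi_1], \ldots, [\xi_n]$ with $\xi_i \colon 0 \rightarrow K \rightarrow X_i \rightarrow Y \rightarrow 0$. Set $K_1 = K^n$, and let $\iota_j \colon K \rightarrow K^n$ and $p_j \colon K^n \rightarrow K$ be the canonical inclusions and projections, so that $p_j \circ \iota_i = \delta_{ij}\, {\rm Id}_K$ and $\sum_i \iota_i \circ p_i = {\rm Id}_{K^n}$. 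Define $[\xi] \in {\rm Ext}^1_\mathcal{A}(Y, K^n)$ by
\[
[\xi] \;=\; \sum_{i=1}^n [\iota_i . \xi_i],
\]
and let $\xi \colon 0 \rightarrow K^n \rightarrow X \stackrel{\alpha}\rightarrow Y \rightarrow 0$ be a representing exact sequence; then $K_1 = K^n \in {\rm add}\, K$ as required.

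The key computation is to evaluate the connecting map $c(\xi, K)$ on the projections $p_j$. Using the functoriality of the pushout (i.e., $(u \circ v).\xi = u.(v.\xi)$ up to equivalence) and additivity with respect to the first variable, one obtains
\[
c(\xi, K)(p_j) \;=\; [p_j . \xi] \;=\; \sum_{i=1}^n [p_j.(\iota_i.\xi_i)] \;=\; \sum_{i=1}^n [(p_j \circ \iota_i).\xi_i] \;=\; [\xi_j],
\]
where the last equality uses that $p_j \circ \iota_i = \delta_{ij}\, {\rm Id}_K$ together with the fact that pushout along a zero morphism yields a split, hence trivial, class. Therefore each generator $[\xi_j]$ lies in ${\rm Im}\, c(\xi, K)$, so $L \subseteq {\rm Im}\, c(\xi, K)$.

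For the reverse inclusion, observe that as a left $\Gamma(K)$-module ${\rm Hom}_\mathcal{A}(K^n, K)$ is free with basis $p_1, \ldots, p_n$ (this is the duality ${\rm Hom}_\mathcal{A}(-, K)\colon {\rm add}\, K \to \Gamma(K)\mbox{-proj}$ recalled before Lemma \ref{lem:factor}, sending $K^n$ to $\Gamma(K)^n$). Since $c(\xi, K)$ is a morphism of left $\Gamma(K)$-modules, its image is generated by $c(\xi, K)(p_1), \ldots, c(\xi, K)(p_n) = [\xi_1], \ldots, [\xi_n]$; this is precisely $L$, and we conclude ${\rm Im}\, c(\xi, K) = L$.

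The only non-formal point is the identity $[p_j . \xi] = [\xi_j]$, which rests on the standard bilinearity and functoriality properties of pushouts of short exact sequences; beyond that the argument is a direct application of the duality for ${\rm add}\, K$ already in play in Lemma \ref{lem:ext}.
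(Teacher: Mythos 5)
Your proof is correct, but it takes a more hands-on route than the paper. The paper's argument is abstract: choose any surjection from a finitely generated projective $\Gamma(K)$-module ${\rm Hom}_\mathcal{A}(K_1,K)$ onto $L$, compose with the inclusion into ${\rm Ext}^1_\mathcal{A}(Y,K)$, and invoke Lemma \ref{lem:ext}(2) to realize the resulting $\Gamma(K)$-module map as a connecting map $c(\xi,K)$ for some extension $\xi$ of $Y$ by $K_1$. You instead build $\xi$ explicitly as the Baer sum $\sum_i [\iota_i.\xi_i]$ of pushouts of a chosen generating set, and verify ${\rm Im}\,c(\xi,K)=L$ by computing $c(\xi,K)(p_j)=[\xi_j]$ and using that ${\rm Hom}_\mathcal{A}(K^n,K)$ is free on $p_1,\dots,p_n$. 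In effect you have inlined, for the special case $K_1=K^n$, the surjectivity content of Lemma \ref{lem:ext}(2); all the identities you use ($\Gamma(K)$-linearity of $c(\xi,K)$, $[(uv).\eta]=[u.(v.\eta)]$, additivity of pushout over Baer sums, $[0.\eta]=0$) are standard, so the computation $c(\xi,K)(p_j)=[\xi_j]$ and the generation argument are sound. What your version buys is an explicit model of the semi-universal extension (the classical ``diagonal'' construction used in representation theory); what the paper's version buys is brevity and the flexibility to take $K_1$ to be an arbitrary object of ${\rm add}\,K$ --- e.g.\ the one corresponding to a projective cover of $L$ --- which is exactly what is exploited later in Corollary \ref{cor:rerm} and Proposition \ref{prop:uni-ext}(3).
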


\begin{proof}
Recall the duality of categories  ${\rm Hom}_\mathcal{A}(-, K)\colon {\rm add} \;K\rightarrow \Gamma(K)\mbox{-proj}$. Since the $\Gamma(K)$-module $L$ is finitely generated,  we may take a morphism $$c\colon {\rm Hom}_\mathcal{A}(K_1, K)\longrightarrow {\rm Ext}_\mathcal{A}^1(Y, K)$$ with image $L$ and $K_1\in {\rm add}\; K$. By Lemma \ref{lem:ext}(2), there exists an exact sequence $\xi\colon 0\rightarrow K_1\rightarrow X\rightarrow Y\rightarrow 0$ such that $c=c(\xi, K)$.
\end{proof}

We call the above exact sequence $\xi$ a \emph{semi-universal $L$-extension} of $Y$ by $K$. In case that ${\rm Ext}_\mathcal{A}^1(Y, K)$ is a finitely generated $\Gamma(K)$-module, a semi-universal ${\rm Ext}_\mathcal{A}^1(Y, K)$-extension is called a \emph{semi-universal extension} of $Y$ by $K$.

  Semi-universal $L$-extensions enjoy a certain (semi-)universal property: if $\xi'\colon 0\rightarrow K'_1\rightarrow X'\stackrel{\alpha'}\rightarrow Y\rightarrow 0$ is  an exact sequence with $K'_1\in {\rm add}\; K$, then by Lemma \ref{lem:factor}   $\alpha$ factors through $\alpha'$ if and only if ${\rm Im}\; c(\xi', K)\subseteq L$; if $\xi'$ is also a semi-universal $L$-extension, then the two morphisms $\alpha$ and $\alpha'$ are right equivalent, or equivalently, there exist morphisms $u\colon K_1\rightarrow K_1'$ and $v\colon K_1'\rightarrow K_1$ satisfying $[\xi']=[u.\xi]$ and $[\xi]=[v.\xi']$.

Let $K, Y$ be objects in $\mathcal{A}$. Recall from the introduction the poset $[\longrightarrow Y\rangle_{\rm epi}$ consisting of right equivalence classes $[\alpha\rangle$ of epimorphisms $\alpha\colon X\rightarrow Y$.  For a left module $M$ over a ring $R$, denote by ${\rm Sub}_{R}\; M$ the poset of $R$-submodules of $M$, and by ${\rm sub}_R\; M$ the subset consisting of finitely generated $R$-submodules.

For $[\alpha\rangle \in [\longrightarrow Y\rangle_{\rm epi}$, we consider the corresponding exact sequence $\xi_\alpha\colon 0\rightarrow {\rm Ker}\; \alpha\rightarrow X\stackrel{\alpha}\rightarrow Y\rightarrow 0$. Define $\delta_{K, Y}([\alpha\rangle)={\rm Im}\; c(\xi_\alpha, K)$; it is a $\Gamma(K)$-submodule of ${\rm Ext}_\mathcal{A}^1(Y, K)$.  By Lemma \ref{lem:factor}, $\delta_{K, Y}([\alpha\rangle)$ is independent of the choice of the representative $\alpha$ of the class $[\alpha\rangle$. This gives rise to the following well-defined map
\begin{align}
\delta_{K, Y}\colon {[\longrightarrow Y\rangle_{\rm epi}} \longrightarrow {\rm Sub}_{\Gamma(K)}\; {\rm Ext}^1_\mathcal{A}(Y, K).
\end{align}

We denote by ${_K[\longrightarrow Y\rangle_{\rm epi}}$ the subset of $[\longrightarrow Y\rangle_{\rm epi}$ formed by those classes $[\alpha\rangle$ that have a representative $\alpha\colon X\rightarrow Y$ with the kernel in ${\rm add}\; K$. Then $\delta_{K, Y}([\alpha\rangle)$ is a finitely generated $\Gamma(K)$-module, since ${\rm Hom}_\mathcal{A}({\rm Ker}\; \alpha, K)$ is a finitely generated projective $\Gamma(K)$-module.

\begin{prop}\label{prop:delta}
Let $\mathcal{A}$ be an abelian category. Keep the notation as above. Then there is a bijection
\begin{align}
\delta_{K, Y}\colon {_K[\longrightarrow Y\rangle_{\rm epi}} \longrightarrow {\rm sub}_{\Gamma(K)}\; {\rm Ext}^1_\mathcal{A}(Y, K).
\end{align}
Moreover, $\delta_{K, Y}$ is an anti-isomorphism of posets.
\end{prop}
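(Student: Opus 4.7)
The strategy is to assemble Proposition~\ref{prop:semi-uni} and the two directions of Lemma~\ref{lem:factor}; almost no new work is required, because the map $\delta_{K,Y}$ is tailored precisely so that these two ingredients furnish surjectivity and injectivity respectively.

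First I would check that the restriction of $\delta_{K,Y}$ to ${_K[\longrightarrow Y\rangle_{\rm epi}}$ actually lands in ${\rm sub}_{\Gamma(K)}\; {\rm Ext}^1_\mathcal{A}(Y,K)$. Given $[\alpha\rangle$ with a representative $\alpha\colon X\rightarrow Y$ whose kernel $K_1$ lies in ${\rm add}\;K$, the connecting map $c(\xi_\alpha,K)$ has domain ${\rm Hom}_\mathcal{A}(K_1,K)$, which is a finitely generated projective $\Gamma(K)$-module by the duality ${\rm Hom}_\mathcal{A}(-,K)\colon {\rm add}\;K\rightarrow \Gamma(K)\text{-proj}$ recalled before Lemma~\ref{lem:factor}; hence its image is finitely generated. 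Well-definedness on right equivalence classes has already been recorded via the ``Consequently'' clause in Lemma~\ref{lem:factor}.

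Next I would establish bijectivity. For \emph{surjectivity}, given a finitely generated $\Gamma(K)$-submodule $L\subseteq {\rm Ext}^1_\mathcal{A}(Y,K)$, Proposition~\ref{prop:semi-uni} produces an exact sequence $\xi\colon 0\rightarrow K_1\rightarrow X\stackrel{\alpha}\rightarrow Y\rightarrow 0$ with $K_1\in {\rm add}\;K$ and ${\rm Im}\;c(\xi,K)=L$, so $[\alpha\rangle\in {_K[\longrightarrow Y\rangle_{\rm epi}}$ is a preimage. For \emph{injectivity}, suppose $\delta_{K,Y}([\alpha_1\rangle)=\delta_{K,Y}([\alpha_2\rangle)$ with both ${\rm Ker}\;\alpha_i\in {\rm add}\;K$; this is precisely the situation of the last sentence of Lemma~\ref{lem:factor}, which directly yields that $\alpha_1$ and $\alpha_2$ are right equivalent.

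Finally, for the order-reversing property I would argue that $[\alpha_1\rangle\leq [\alpha_2\rangle$ in ${_K[\longrightarrow Y\rangle_{\rm epi}}$ if and only if $\delta_{K,Y}([\alpha_2\rangle)\subseteq \delta_{K,Y}([\alpha_1\rangle)$. The forward direction is the implication $(1)\Rightarrow(3)$ of Lemma~\ref{lem:factor} (via $(1)\Rightarrow(2)\Rightarrow(3)$). The reverse direction uses that ${\rm Ker}\;\alpha_2\in {\rm add}\;K$, so the implication $(3)\Rightarrow(2)$ of Lemma~\ref{lem:factor} is available and gives a $u\colon {\rm Ker}\;\alpha_1\rightarrow {\rm Ker}\;\alpha_2$ with $[\xi_{\alpha_2}]=[u.\xi_{\alpha_1}]$, whence $(2)\Rightarrow(1)$ yields a factorisation $\alpha_1=\alpha_2\circ v$. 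I do not anticipate any substantive obstacle: the heart of the argument is Lemma~\ref{lem:factor}'s passage from submodule containment to the existence of the connecting morphism $u\colon K_1\rightarrow K_2$ (where the ${\rm add}\;K$ hypothesis on ${\rm Ker}\;\alpha_2$ is essential), and that has already been done before this statement.
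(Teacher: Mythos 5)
Your proposal is correct and follows essentially the same route as the paper: surjectivity from Proposition~\ref{prop:semi-uni} (semi-universal $L$-extensions), injectivity and the order-reversal from the two directions of Lemma~\ref{lem:factor}, with the finite generation of the image coming from the projectivity of ${\rm Hom}_\mathcal{A}({\rm Ker}\;\alpha, K)$. The paper's proof is just a terser version of exactly this assembly, so there is nothing to add.
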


\begin{proof}
The injectivity of $\delta_{K, Y}$ follows from Lemma \ref{lem:factor}. For the surjectivity, let $L$ be a finitely generated $\Gamma(K)$-submodule of ${\rm Ext}^1_\mathcal{A}(Y, K)$. We consider a semi-universal $L$-extension $\xi\colon 0\rightarrow K_1\rightarrow X\stackrel{\alpha}\rightarrow Y\rightarrow 0$ as in Proposition \ref{prop:semi-uni}; in particular, $[\alpha\rangle$ lies in $_K[\longrightarrow Y\rangle_{\rm epi}$. Then $\delta_{K, Y}([\alpha\rangle)=L$. The bijection $\delta_{K, Y}$ reverses the orders of the two posets by Lemma \ref{lem:factor}.
\end{proof}

\section{Right minimal epimorphisms and universal extensions}

In this section, we study right minimal epimorphisms and define universal extensions. A semi-universal extension is a universal extension if and only if the corresponding epimorphism is right minimal.

Throughout $\mathcal{A}$ is an abelian category. Recall from  \cite[I.2]{ARS} that a morphism $\alpha\colon X\rightarrow Y$ is \emph{right minimal} provided that any endomorphism $u\colon X\rightarrow X$ satisfying $\alpha=\alpha\circ u$ is an automorphism. We observe that an epimorphism $P\rightarrow M$ of $R$-modules with $P$ projective is a projective cover if and only if it is right minimal.

We need the following well-known observation.

\begin{lem}\label{lem:rm}
Let $\xi\colon 0\rightarrow K\rightarrow X\stackrel{\alpha}\rightarrow Y\rightarrow 0$ be an exact sequence in $\mathcal{A}$. Then the following statements are equivalent:
\begin{enumerate}
\item the epimorphism  $\alpha$ is right minimal;
 \item any endomorphism $u\colon K\rightarrow K$ satisfying $[\xi]=[u.\xi]$ is an automorphism;
 \item the connecting map $c(\xi, K)\colon {\rm Hom}_\mathcal{A}(K, K)\rightarrow {\rm Ext}^1_\mathcal{A}(Y, K)$ of $\Gamma(K)$-modules is right minimal.
 \end{enumerate}
\end{lem}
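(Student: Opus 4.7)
The plan is to prove the cycle $(1)\Leftrightarrow(2)\Leftrightarrow(3)$.

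For the equivalence $(1)\Leftrightarrow(2)$, I would apply the implication $(1)\Leftrightarrow(2)$ of Lemma \ref{lem:factor} with $\xi_1=\xi_2=\xi$. This supplies a bijective correspondence between endomorphisms $v\colon X\to X$ with $\alpha=\alpha\circ v$ and endomorphisms $u\colon K\to K$ with $[\xi]=[u.\xi]$: given $v$, its restriction to the kernel yields $u$; given $u$, pushout gives a morphism of the short exact sequence to itself with components $(u,v,\mathrm{id}_Y)$. By the snake lemma applied to this ladder, $\ker u\cong \ker v$ and $\mathrm{coker}\, u\cong \mathrm{coker}\, v$, so $u$ is an automorphism if and only if $v$ is. Hence the universal quantifiers in $(1)$ and $(2)$ match.

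For $(2)\Leftrightarrow(3)$, I would use the Yoneda-type isomorphism of Lemma \ref{lem:ext}(1) applied with $K'=K$, which identifies $\Gamma(K)$-linear endomorphisms of $\mathrm{Hom}_\mathcal{A}(K,K)$ with morphisms $u\colon K\to K$ via $\phi=\mathrm{Hom}_\mathcal{A}(u,K)$, that is, $\phi(f)=f\circ u$. Under this dictionary $\phi$ is an automorphism of $\Gamma(K)$-modules if and only if $u$ is an automorphism of $K$. Next I would compute
\begin{align*}
\bigl(c(\xi,K)\circ\phi\bigr)(f) \;=\; c(\xi,K)(f\circ u) \;=\; [(f\circ u).\xi] \;=\; [f.(u.\xi)]\;=\;c(u.\xi,K)(f),
\end{align*}
using the associativity of pushouts. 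Thus $c(\xi,K)\circ\phi=c(\xi,K)$ is equivalent to $c(u.\xi,K)=c(\xi,K)$, which by evaluating at $f=\mathrm{Id}_K$ forces $[u.\xi]=[\xi]$; conversely, $\Gamma(K)$-linearity of $c(\xi,K)$ shows that $[u.\xi]=[\xi]$ already implies $c(u.\xi,K)=c(\xi,K)$. Combining these observations, the quantified statement $(3)$ translates verbatim into $(2)$.

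The only real subtlety is the bookkeeping in step two: one must check that under the left $\Gamma(K)$-module structure on $\mathrm{Hom}_\mathcal{A}(K,K)$, every $\Gamma(K)$-linear endomorphism arises as post-composition with some $u\in\Gamma(K)$ in the covariant direction prescribed by $\mathrm{Hom}_\mathcal{A}(-,K)$, so that $u$ automorphism is genuinely equivalent to $\phi$ automorphism. Since this is precisely what Lemma \ref{lem:ext}(1) encodes, no additional work beyond invoking it is required, and I expect no other obstacles.
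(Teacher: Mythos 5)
Your proof is correct and follows essentially the same route as the paper: the identification of $\Gamma(K)$-linear endomorphisms of ${\rm Hom}_\mathcal{A}(K,K)$ with morphisms $u\colon K\to K$ for $(2)\Leftrightarrow(3)$, and the ladder $(u,v,{\rm Id}_Y)$ between $\xi$ and itself for $(1)\Leftrightarrow(2)$. The only cosmetic differences are that you make the snake-lemma transfer of ``automorphism'' between $u$ and $v$ explicit where the paper leaves it implicit, and your word ``bijective correspondence'' overstates what is needed (mere existence of a companion $v$ for each $u$, and of a restricted $u$ for each $v$, suffices).
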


\begin{proof}
For the equivalence ``$(2)\Leftrightarrow(3)$", we observe that any morphism $\Gamma(K)={\rm Hom}_\mathcal{A}(K, K) \rightarrow M$ of $\Gamma(K)$-modules is uniquely determined by the image of ${\rm Id}_K$, and thus any endomorphism ${\rm Hom}_\mathcal{A}(K, K)\rightarrow {\rm Hom}_\mathcal{A}(K, K)$ is of the form ${\rm Hom}_A(u, K)$ for a unique morphism $u\colon K\rightarrow K$. It follows that $c(\xi, K)=c(\xi, K)\circ {\rm Hom}_\mathcal{A}(u, K)$ if and only if $[\xi]=[u.\xi]$. We infer from this the required equivalence.

For ``$(1)\Rightarrow (2)$", we observe that the identity $[\xi]=[u.\xi]$ implies the existence of the following commutative diagram
\[\xymatrix{
0 \ar[r] & K\ar[d]^u \ar[r] & X\ar[r]^\alpha \ar@{.>}[d]^v & Y \ar[r] \ar@{=}[d] & 0 \\
0 \ar[r] & K \ar[r] & X\ar[r]^\alpha & Y \ar[r] & 0.
}\]
Since $\alpha=\alpha \circ v$ and $\alpha$ is right minimal, then $v$ is an automorphism. It follows that $u$ is an automorphism. For ``$(2)\Rightarrow (1)$", we just reverse the argument.
\end{proof}

The following characterization of right minimal epimorphisms extends slightly the above equivalence ``$(1)\Leftrightarrow (3)$".

\begin{prop}\label{prop:rm}
Let $\xi\colon 0\rightarrow K\rightarrow X\stackrel{\alpha}\rightarrow Y\rightarrow 0$ be an exact sequence, and let $K'$ be an object with  $K\in {\rm add}\; K'$. Then $\alpha$ is right minimal if and only if the connecting map $c(\xi, K')\colon {\rm Hom}_\mathcal{A}(K, K')\rightarrow {\rm Ext}^1_\mathcal{A}(Y, K')$ of $\Gamma(K')$-modules is right minimal.
\end{prop}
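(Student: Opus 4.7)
The plan is to reduce the general statement to the special case $K'=K$ treated in Lemma \ref{lem:rm}, by exploiting the duality ${\rm Hom}_\mathcal{A}(-, K')\colon {\rm add}\; K' \to \Gamma(K')\text{-proj}$ and the Yoneda-type identifications in Lemma \ref{lem:ext}.

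First, since $K\in {\rm add}\; K'$, the $\Gamma(K')$-module ${\rm Hom}_\mathcal{A}(K, K')$ is finitely generated projective, and by Lemma \ref{lem:ext}(1), applied with the roles of $K$ and $K'$ interchanged, every $\Gamma(K')$-module endomorphism $w\colon {\rm Hom}_\mathcal{A}(K, K')\to {\rm Hom}_\mathcal{A}(K, K')$ has the form $w={\rm Hom}_\mathcal{A}(u, K')$ for a uniquely determined $u\in \Gamma(K)$. Since ${\rm Hom}_\mathcal{A}(-, K')$ restricted to ${\rm add}\; K'$ is a contravariant equivalence, $w$ is an automorphism if and only if $u$ is.

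Next, a direct computation shows that for every $f\colon K\to K'$,
\[
(c(\xi, K')\circ {\rm Hom}_\mathcal{A}(u, K'))(f)=[(f\circ u).\xi]=[f.(u.\xi)]=c(u.\xi, K')(f),
\]
so $c(\xi, K')\circ w=c(\xi, K')$ if and only if $c(u.\xi, K')=c(\xi, K')$. Applying Lemma \ref{lem:ext}(2), again with the roles of $K$ and $K'$ swapped, the assignment $[\eta]\mapsto c(\eta, K')$ is a bijection ${\rm Ext}^1_\mathcal{A}(Y, K)\stackrel{\sim}\to {\rm Hom}_{\Gamma(K')}({\rm Hom}_\mathcal{A}(K, K'), {\rm Ext}^1_\mathcal{A}(Y, K'))$, so the last equality is equivalent to $[u.\xi]=[\xi]$ in ${\rm Ext}^1_\mathcal{A}(Y, K)$.

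Putting these two translations together, right minimality of $c(\xi, K')$ as a $\Gamma(K')$-module morphism is equivalent to the condition that every $u\in \Gamma(K)$ with $[u.\xi]=[\xi]$ is an automorphism of $K$, which by Lemma \ref{lem:rm} (the equivalence $(1)\Leftrightarrow(2)$ there) is equivalent to $\alpha$ being right minimal. The main thing to watch is the bookkeeping of which object plays which role when invoking Lemma \ref{lem:ext}; no genuine obstacle appears beyond that, since the projectivity of ${\rm Hom}_\mathcal{A}(K, K')$ and the duality on ${\rm add}\; K'$ give both the lifting of $w$ to some $u$ and the preservation of the isomorphism property.
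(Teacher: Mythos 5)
Your proof is correct and follows essentially the same route as the paper: identify $\Gamma(K')$-endomorphisms of ${\rm Hom}_\mathcal{A}(K,K')$ with endomorphisms $u$ of $K$ via the duality on ${\rm add}\;K'$, translate $c(\xi,K')\circ{\rm Hom}_\mathcal{A}(u,K')=c(\xi,K')$ into $[u.\xi]=[\xi]$, and conclude by Lemma \ref{lem:rm}. The only cosmetic difference is that you invoke the injectivity in Lemma \ref{lem:ext}(2) (with the roles of $K$ and $K'$ swapped) to get $[u.\xi]=[\xi]$, where the paper re-derives this by writing ${\rm Id}_K=\sum y_i\circ x_i$.
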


\begin{proof}
For the ``only if" part, recall the duality ${\rm Hom}_\mathcal{A}(-, K')\colon {\rm add} \;K'\rightarrow \Gamma(K')\mbox{-proj}$. Then any morphism ${\rm Hom}_\mathcal{A}(K, K')\rightarrow {\rm Hom}_\mathcal{A}(K, K')$ of $\Gamma(K')$-modules is of the form ${\rm Hom}_\mathcal{A}(u, K')$ for $u\colon K\rightarrow K$. We assume that $c(\xi, K')=c(\xi, K')\circ {\rm Hom}_\mathcal{A}(u, K')$, that is, any morphism $x\colon K\rightarrow K'$ satisfies $[x.\xi]=[x.(u.\xi)]$. We will show that $u$, thus ${\rm Hom}_\mathcal{A}(u, K')$, is an automorphism.

Since $K\in {\rm add}\; K'$, there exist morphisms $x_i\colon K\rightarrow K'$ and $y_i\colon K'\rightarrow K$, $1\leq i\leq n$, satisfying ${\rm Id}_K=\sum_{i=1}^n y_i\circ x_i$. Then we have
\begin{align*}
[\xi]=[{\rm Id}_K. \xi]=\sum_{i=1}^n[y_i.(x_i.\xi)]=\sum_{i=1}^n [y_i.(x_i.(u. \xi))]=[u.\xi].
\end{align*}
Since $\alpha$ is right minimal, we infer that $u$ is an automorphism by Lemma \ref{lem:rm}(2).

For the ``if" part, it suffices to show that any endomorphism $u\colon K\rightarrow K$ with $[\xi]=[u.\xi]$ is an automorphism; see Lemma \ref{lem:rm}. Then we have $c(\xi, K')=c(\xi, K')\circ {\rm Hom}_A(u, K')$. By the right minimality of $c(\xi, K')$, we infer that ${\rm Hom}_A(u, K')$, thus $u$, is an automorphism.
\end{proof}

The following result seems to be well known.

\begin{cor}\label{cor:rerm}
Let $\alpha\colon X\rightarrow Y$ be a morphism  and $K$ an object with ${\rm Ker}\; \alpha\in {\rm add}\; K$. Consider the corresponding exact sequence $\xi_\alpha\colon 0\rightarrow {\rm Ker}\; \alpha\rightarrow X\rightarrow {\rm Im}\; \alpha\rightarrow 0$. Then $\alpha$ is right equivalent to a right minimal morphism if and only if
the $\Gamma(K)$-module ${\rm Im}\; c(\xi_\alpha, K)$ has a projective cover.
\end{cor}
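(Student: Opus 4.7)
The plan is to reduce to the case that $\alpha$ is an epimorphism, and then apply Proposition~\ref{prop:rm} together with the duality ${\rm Hom}_\mathcal{A}(-, K)\colon {\rm add}\; K \to \Gamma(K)\mbox{-proj}$ and Lemma~\ref{lem:ext}(2) in order to translate right minimality in $\mathcal{A}$ into right minimality of a suitable $\Gamma(K)$-module map. The guiding observation is twofold: a right minimal epimorphism from a finitely generated projective $\Gamma(K)$-module onto a module $L$ is, by definition, a projective cover of $L$; and any $\Gamma(K)$-module map $f = \iota\circ e$ with $e$ an epimorphism and $\iota$ a monomorphism is right minimal if and only if $e$ is right minimal, since $\iota$ can be cancelled on the left.

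Replacing $\alpha$ by its corestriction $\bar\alpha\colon X \to {\rm Im}\;\alpha$ leaves $\xi_\alpha$ unchanged and, via the mono $i\colon {\rm Im}\;\alpha \hookrightarrow Y$, puts right minimal representatives of $[\alpha\rangle$ and of $[\bar\alpha\rangle$ into bijection (by post- and pre-composition with $i$, using that $i$ is left-cancellative). So I would assume $\alpha$ is an epimorphism with codomain $Y = {\rm Im}\;\alpha$, and set $L = {\rm Im}\; c(\xi_\alpha, K) \in {\rm sub}_{\Gamma(K)}\; {\rm Ext}^1_\mathcal{A}(Y, K)$; this is finitely generated since ${\rm Hom}_\mathcal{A}({\rm Ker}\;\alpha, K)$ is.

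For the ``only if'' direction, suppose $\alpha_1\colon X_1 \to Y$ is a right minimal representative of $[\alpha\rangle$, with witnesses $f\colon X \to X_1$ and $g\colon X_1 \to X$ satisfying $\alpha = \alpha_1 f$ and $\alpha_1 = \alpha g$. Right minimality of $\alpha_1$ forces $fg$ to be an automorphism of $X_1$; after replacing $g$ by $g(fg)^{-1}$ we may assume $fg = {\rm Id}_{X_1}$, so $g$ splits and $X \cong X_1 \oplus X_0$ with $X_0 := {\rm Ker}\; f$, giving $\alpha = (\alpha_1, 0)$ and $X_0 \subseteq {\rm Ker}\;\alpha$. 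Hence ${\rm Ker}\;\alpha_1$ is a direct summand of ${\rm Ker}\;\alpha$ and lies in ${\rm add}\; K$; by Lemma~\ref{lem:factor} we have ${\rm Im}\; c(\xi_{\alpha_1}, K) = L$; and by Proposition~\ref{prop:rm} the connecting map $c(\xi_{\alpha_1}, K)\colon {\rm Hom}_\mathcal{A}({\rm Ker}\;\alpha_1, K) \to {\rm Ext}^1_\mathcal{A}(Y, K)$ is right minimal. Its surjection onto $L$ is therefore a right minimal epimorphism from a finitely generated projective $\Gamma(K)$-module, i.e.\ a projective cover of $L$.

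For the converse, a projective cover $p\colon P \twoheadrightarrow L$ has $P$ finitely generated projective, so by the duality $P \cong {\rm Hom}_\mathcal{A}(K', K)$ for some $K' \in {\rm add}\; K$. Composing $p$ with the inclusion $L \hookrightarrow {\rm Ext}^1_\mathcal{A}(Y, K)$ and invoking Lemma~\ref{lem:ext}(2) yields an exact sequence $\xi'\colon 0 \to K' \to X' \stackrel{\alpha'}\rightarrow Y \to 0$ whose connecting map $c(\xi', K)$ equals this composite. The mono-cancellation argument shows $c(\xi', K)$ is right minimal, whence Proposition~\ref{prop:rm} gives $\alpha'$ right minimal, and Proposition~\ref{prop:delta} identifies $[\alpha'\rangle = [\alpha\rangle$ in ${_K[\longrightarrow Y\rangle_{\rm epi}}$, both classes mapping under $\delta_{K, Y}$ to $L$. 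I expect the main obstacle to lie in the direct-summand decomposition in the ``only if'' step: it is what guarantees that the right minimal representative $\alpha_1$ has kernel in ${\rm add}\; K$, without which Proposition~\ref{prop:rm} could not be applied to $\alpha_1$.
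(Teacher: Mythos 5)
Your proposal is correct and follows essentially the same route as the paper: reduce to the epimorphism case, use Proposition~\ref{prop:rm} together with Lemma~\ref{lem:ext}(2) and the duality ${\rm Hom}_\mathcal{A}(-,K)\colon {\rm add}\,K\to\Gamma(K)\mbox{-proj}$ to translate right minimality of $\alpha$ into right minimality of the connecting map, and use Lemma~\ref{lem:factor} (equivalently, the injectivity in Proposition~\ref{prop:delta}) to match right equivalence classes. The only cosmetic difference is in the ``only if'' step, where you obtain ${\rm Ker}\,\alpha_1\in{\rm add}\,K$ by splitting $X$ itself via $fg={\rm Id}_{X_1}$, whereas the paper splits the kernel directly by applying Lemma~\ref{lem:rm} to $u\circ v$; both correctly yield that ${\rm Ker}\,\alpha_1$ is a direct summand of ${\rm Ker}\,\alpha$.
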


\begin{proof}
Without loss of generality, we assume that $\alpha$ is epic. Indeed, $\alpha\colon X\rightarrow Y$ is right equivalent to $\alpha'\colon X'\rightarrow Y$ if and only if ${\rm Im}\; \alpha={\rm Im}\; \alpha'$ and the induced epimorphisms $X\rightarrow {\rm Im}\; \alpha$ and $X'\rightarrow {\rm Im}\; \alpha'$ are right equivalent; moreover, $\alpha'$ is right minimal if and only if so is $X'\rightarrow {\rm Im}\; \alpha'$.

For the ``only if" part, we assume that $\alpha$ is right equivalent to a right minimal morphism $\alpha'\colon X'\rightarrow Y$. Observe that $\alpha'$ is epic and we have the corresponding exact sequence $\xi_{\alpha'}\colon 0\rightarrow {\rm Ker}\; \alpha'\rightarrow X'\rightarrow Y\rightarrow 0$. By Lemma \ref{lem:factor}(1) and (2), there exist morphisms $u\colon {\rm Ker}\; \alpha\rightarrow {\rm Ker}\; \alpha'$ and $v\colon {\rm Ker}\; \alpha'\rightarrow {\rm Ker}\; \alpha$ such that $[\xi_{\alpha'}]=[u.\xi_\alpha]$ and $[\xi_\alpha]=[v. \xi_{\alpha'}]$. By $[\xi_{\alpha'}]=[(u\circ v).\xi_{\alpha'}]$ and the right minimality of $\alpha'$, we apply Lemma \ref{lem:rm} to obtain that $u\circ v$ is an isomorphism. In particular, ${\rm Ker}\; \alpha'$ is a direct summand of ${\rm Ker}\; \alpha$, hence belongs to ${\rm add}\; K$. By Lemma \ref{lem:factor}, ${\rm Im}\; c(\xi_\alpha, K)={\rm Im}\; c(\xi_{\alpha'}, K)$. By Proposition \ref{prop:rm} the morphism ${\rm Hom}_\mathcal{A}({\rm Ker}\; \alpha', K)\rightarrow {\rm Im}\; c(\xi_{\alpha'}, K)$ of $\Gamma(K)$-modules is a projective cover.

For the ``if" part, we take a projective cover ${\rm Hom}_\mathcal{A}(K', K) \rightarrow {\rm Im}\; c(\xi_\alpha, K)$ of $\Gamma(K)$-modules with $K'\in {\rm add}\; K$. Thus we obtain a right minimal morphism $c\colon {\rm Hom}_\mathcal{A}(K', K)\rightarrow {\rm Ext}_\mathcal{A}^1(Y, K)$ of $\Gamma(K)$-modules, which corresponds by Lemma \ref{lem:ext}(2) to an exact sequence $\xi'\colon 0\rightarrow K'\rightarrow X'\stackrel{\alpha'}\rightarrow Y\rightarrow 0$; in particular, $c=c(\xi', K)$ and ${\rm Im}\; c(\xi_\alpha, K)={\rm Im}\; c(\xi', K)$. By Proposition \ref{prop:rm}, the morphism $\alpha'$ is right minimal, and by Lemma \ref{lem:factor} $\alpha$ is right equivalent to $\alpha'$.
\end{proof}

The above proposition suggests the following definition. The notion does not come as a surprise, since universal extensions are widely used in the representation theory of artin algebras to construct modules with prescribed homological properties.

\begin{defn}\label{defn:uni-ext}
Let $K, Y$ be two objects in $\mathcal{A}$ and $L\subseteq {\rm Ext}_\mathcal{A}^1(Y, K)$ be a finitely generated $\Gamma(K)$-submodule. A \emph{universal $L$-extension} of $Y$ by $K$ is an exact sequence $\xi\colon 0\rightarrow K'\rightarrow X \rightarrow Y\rightarrow 0$ satisfying the following conditions:
\begin{enumerate}
\item[(UE1)] $K'\in {\rm add}\; K$;
 \item[(UE2)] ${\rm Im}\; (c(\xi, K)\colon {\rm Hom}_A(K', K)\rightarrow {\rm Ext}_\mathcal{A}^1(Y, K))=L$;
 \item[(UE3)] the connecting map $c(\xi, K)\colon {\rm Hom}_A(K', K)\rightarrow {\rm Ext}_\mathcal{A}^1(Y, K)$ of $\Gamma(K)$-modules is right minimal.
\end{enumerate}
In  case that ${\rm Ext}_\mathcal{A}^1(Y, K)$ is a finitely generated $\Gamma(K)$-module, a universal ${\rm Ext}_\mathcal{A}^1(Y, K)$-extension is called a \emph{universal extension} of $Y$ by $K$. \hfill $\square$
\end{defn}

We summarize some properties of universal extensions in the following proposition. In particular, the second statement implies that universal extensions are unique up to certain isomorphisms.

\begin{prop}\label{prop:uni-ext}
Let $K, Y$ be objects in $\mathcal{A}$ and let $L\subseteq {\rm Ext}_\mathcal{A}^1(Y, K)$ be a finitely generated $\Gamma(K)$-submodule. Let $\xi_1\colon 0\rightarrow K_1\rightarrow X_1\stackrel{\alpha_1}\rightarrow Y\rightarrow 0$ and $\xi_2\colon 0\rightarrow K_2\rightarrow X_2\stackrel{\alpha_2}\rightarrow Y\rightarrow 0$ be exact sequences in $\mathcal{A}$. Then the following statements hold.
\begin{enumerate}
\item The exact sequence $\xi_1$ is a universal $L$-extension if and only if it is a semi-universal $L$-extension and $\alpha_1$ is right minimal.
\item If $\xi_1$ is a universal $L$-extension, then $\xi_2$ is a universal $L$-extension if and only if there exists an isomorphism $u\colon K_1\rightarrow K_2$ such that $[\xi_2]=[u.\xi_1]$.
\item There exists a universal $L$-extension if and only if the $\Gamma(K)$-module $L$ has a projective cover.
\end{enumerate}
\end{prop}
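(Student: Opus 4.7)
The plan is to handle the three statements in order, leveraging the already-established characterisations of right minimality (Lemma \ref{lem:rm} and Proposition \ref{prop:rm}) together with Lemma \ref{lem:factor} and the duality ${\rm Hom}_\mathcal{A}(-,K)\colon {\rm add}\,K\to \Gamma(K)\mbox{-proj}$. For (1), I would note that (UE1) and (UE2) together form precisely the definition of a semi-universal $L$-extension, so the real content of (1) is the equivalence between (UE3) and right minimality of $\alpha_1$. This is Proposition \ref{prop:rm} applied to $\xi_1$ with the ambient object taken to be our $K$: the hypothesis ``kernel in ${\rm add}\,K$'' is exactly (UE1), and the conclusion gives the desired equivalence.

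For (2), assume both $\xi_1$ and $\xi_2$ are universal $L$-extensions. Since their connecting maps share the image $L$, Lemma \ref{lem:factor} yields morphisms $u\colon K_1\to K_2$ and $v\colon K_2\to K_1$ with $[\xi_2]=[u.\xi_1]$ and $[\xi_1]=[v.\xi_2]$. Composing these gives $[\xi_1]=[(v\circ u).\xi_1]$, whence by the right minimality of $\alpha_1$ supplied by (1), together with Lemma \ref{lem:rm}(2), $v\circ u$ is an automorphism; symmetrically, so is $u\circ v$, and hence $u$ is an isomorphism. For the converse, any isomorphism $u$ with $[\xi_2]=[u.\xi_1]$ yields the factorisation $c(\xi_2,K)=c(\xi_1,K)\circ {\rm Hom}_\mathcal{A}(u,K)$, and since ${\rm Hom}_\mathcal{A}(u,K)$ is itself an isomorphism of $\Gamma(K)$-modules, properties (UE1)--(UE3) transfer at once from $\xi_1$ to $\xi_2$.

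For (3), the forward direction is essentially by definition: if a universal $L$-extension $\xi$ exists, then $c(\xi,K)\colon {\rm Hom}_\mathcal{A}(K',K)\to {\rm Ext}^1_\mathcal{A}(Y,K)$ is right minimal with image $L$ and its source is a finitely generated projective $\Gamma(K)$-module via the duality; restricting the codomain to $L$ preserves right minimality since $L\hookrightarrow {\rm Ext}^1_\mathcal{A}(Y,K)$ is monic, so the corestricted map is a projective cover of $L$. Conversely, given a projective cover $p\colon P\twoheadrightarrow L$, I would use the duality to write $P={\rm Hom}_\mathcal{A}(K',K)$ with $K'\in {\rm add}\,K$, compose with the inclusion into ${\rm Ext}^1_\mathcal{A}(Y,K)$, and invoke Lemma \ref{lem:ext}(2) to realise this composite as $c(\xi,K)$ for some exact sequence $\xi\colon 0\to K'\to X\to Y\to 0$; then (UE1)--(UE3) are immediate. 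The only real obstacle is the bookkeeping in the forward half of (2), specifically the manipulation of pushouts needed to extract an automorphism from right minimality; once (1) is in hand, the rest is a matter of organising the already-established lemmas.
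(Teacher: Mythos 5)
Your proposal is correct and follows essentially the same route as the paper: Proposition \ref{prop:rm} for (1), Lemma \ref{lem:factor} combined with Lemma \ref{lem:rm} for the forward half of (2), and Lemma \ref{lem:ext}(2) together with the equivalence between projective covers and right minimal epimorphisms from projectives for (3). You merely spell out the ``if'' directions of (2) and (3) that the paper dismisses as immediate, and those details are accurate.
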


\begin{proof}
(1) follows from Proposition \ref{prop:rm}. For (2), it suffices to prove the ``only if" part. In this case, both $\xi_1$ and $\xi_2$ are semi-universal $L$-extensions. By Lemma \ref{lem:factor} there exist $u\colon K_1\rightarrow K_2$ and $u'\colon K_2\rightarrow K_1$ satisfying $[\xi_2]=[u.\xi_1]$ and $[\xi_1]=[u'.\xi_2]$. Then we have $[\xi_1]=[(u'\circ u).\xi_1]$ and $[\xi_2]=[(u\circ u').\xi_2]$. By Lemma \ref{lem:rm}, both $u'\circ u$ and $u\circ u'$ are automorphisms. We infer that $u$ is an isomorphism.

By Proposition \ref{prop:rm} it suffices to prove the ``if" part of (3). We take a morphism $c\colon {\rm Hom}_\mathcal{A}(K', K)\rightarrow {\rm Ext}_\mathcal{A}^1(Y, K)$ of $\Gamma(K)$-modules with $K'\in {\rm add}\; K$ and ${\rm Im}\; c=L$; moreover, the induced epimorphism ${\rm Hom}_\mathcal{A}(K', K)\rightarrow L$ is a projective cover. In particular, the morphism $c$ is right minimal. By Lemma \ref{lem:ext}(2), there exists an exact sequence $\xi\colon 0\rightarrow K'\rightarrow X\rightarrow Y\rightarrow 0$ with the property $c=c(\xi, K)$. Then $\xi$ is a universal $L$-extension.
\end{proof}

\section{The Auslander-Reiten duality and a bijection triangle}

In this section, we exploit a bijection triangle in an abelian category having Auslander-Reiten duality; see Theorem \ref{thm:1}. It implies that the restricted Auslander bijection holds in an abelian category having Auslander-Reiten duality.

Let $k$ be a commutative artinian ring. We denote by $D={\rm Hom}_k(-, E)$ the duality on finitely generated $k$-modules, where $E$ is the minimal injective cogenerator of $k$.

Let $\mathcal{A}$ be a Hom-finite $k$-linear abelian category. The Hom-finiteness means that the $k$-module ${\rm Hom}_\mathcal{A}(X, Y)$ is finitely generated for each pair $X, Y$ of objects. In particular, for any object $K$ the endomorphism ring $\Gamma(K)$ is an artin algebra and thus any $\Gamma(K)$-module has a projective cover. As a consequence of  Corollary \ref{cor:rerm}, any morphism in $\mathcal{A}$ is right equivalent to a right minimal morphism.

Following \cite{LZ}, a morphism $\alpha\colon X\rightarrow Y$ is \emph{projectively trivial} if it factors through any epimorphism ending at $Y$. This is equivalent to ${\rm Ext}_\mathcal{A}^1(\alpha, -)=0$. The latter means that $[\xi.\alpha]=0$ for  any extension $[\xi]\in {\rm Ext}_\mathcal{A}^1(Y, Z)$ and any object $Z$. Here, $\xi.\alpha$ denotes the pullback of $\xi$ along $\alpha$.

For any objects $X$ and $Y$, we denote by $\mathcal{P}(X, Y)$ the $k$-submodule  of ${\rm Hom}_\mathcal{A}(X, Y)$ formed by projectively trivial morphisms. This gives rise to an ideal $\mathcal{P}$ of $\mathcal{A}$ and the corresponding factor category is denoted by $\underline{\mathcal{A}}$; the Hom spaces in $\underline{\mathcal{A}}$ are denoted by $\underline{\rm Hom}_\mathcal{A}(X, Y)$. Hence, we have $\underline{\rm Hom}_\mathcal{A}(X, Y)={\rm Hom}_\mathcal{A}(X, Y)/\mathcal{P}(X, Y)$.

Dually, one defines \emph{injectively trivial} morphisms, the ideal $\mathcal{I}$ of injectively trivial morphisms, the factor category $\overline{\mathcal{A}}$ and the notation $\overline{\rm Hom}_\mathcal{A}(X, Y)$; compare \cite[IV.1]{ARS}.

The abelian category $\mathcal{A}$ is said to \emph{have Auslander-Reiten duality} \cite{LZ} provided that there exists a $k$-linear equivalence $\tau\colon \underline{\mathcal{A}}\rightarrow \overline{\mathcal{A}}$ with a $k$-linear functorial isomorphism
\begin{align}\label{equ:AR}
\Phi_{X, Y}\colon D{\rm Ext}_\mathcal{A}^1(X, Y) \stackrel{\sim}\longrightarrow \overline{\rm Hom}_\mathcal{A}(Y, \tau X).
\end{align}
 The equivalence $\tau$ is called the \emph{Auslander-Reiten translation} of $\mathcal{A}$; it is unique up to a natural isomorphism.

\begin{exm}
{\rm Let $\mathcal{A}$ be a  Hom-finite $k$-linear abelian category. By \cite[Theorem 1.1]{LZ}, $\mathcal{A}$ has Auslander-Reiten duality if and only if it has almost split sequences. It follows from \cite[Proposition 3.2]{AR} that the category of finitely presented functors on a dualizing $k$-variety has Auslander-Reiten duality. In particular, the category of finitely generated modules over an artin algebra has Auslander-Reiten duality.

For another class of examples, let $Q$ be a connected locally finite interval-finite quiver and let $k$ be a field. Then the category ${\rm rep}(Q)$ of finitely presented representations of $Q$  over $k$ has Auslander-Reiten duality if and only if either $Q$ has neither left infinite paths nor right infinite paths, or $Q$ itself is a left infinite path. For details, we refer to \cite[Theorem 3.7]{BLP} and \cite[Corollary 4.5]{Ji}.

 We observe that $\mathcal{A}$ has Serre duality in the sense of \cite[Theorem 5.2]{LZ} if and only if it has Auslander-Reiten duality and any projectively trivial or injectively trivial morphism is zero; compare \cite[Lemma 5.1]{LZ}. \hfill $\square$}
\end{exm}

Let $\mathcal{A}$ have the Auslander-Reiten duality (\ref{equ:AR}).  We infer that the $k$-module ${\rm Ext}_\mathcal{A}^1(X, Y)$ is finitely generated for each pair $X, Y$ of objects, and thus any $\Gamma(Y)$-submodule $L$ of ${\rm Ext}_\mathcal{A}^1(X, Y)$ is finitely generated. Hence, universal $L$-extensions in $\mathcal{A}$ are always defined and they exist by Proposition \ref{prop:uni-ext}(3).

Denote by $\tau^{-1}$ a quasi-inverse of $\tau$. Moreover, we fix the counit $\varepsilon\colon \tau^{-1}\tau\rightarrow {\rm Id}_{\underline{\mathcal{A}}}$ and the unit $\eta\colon {\rm Id}_{\overline{\mathcal{A}}}\rightarrow \tau \tau^{-1}$ for the adjoint pair $(\tau^{-1}, \tau)$.  The isomorphism (\ref{equ:AR}) induces another functorial isomorphism
\begin{align}\label{equ:AR2}
\Psi_{X, Y}\colon D{\rm Ext}_\mathcal{A}^1(X, Y) \stackrel{\Phi_{X, Y}}\longrightarrow \overline{\rm Hom}_\mathcal{A}(Y, \tau X) \longrightarrow \underline{\rm Hom}_\mathcal{A}(\tau^{-1}Y, X).
\end{align}
Here, the right isomorphism is given by the inverse of the adjunction, which sends $f\colon Y\rightarrow \tau X$ to $\varepsilon_X\circ \tau^{-1}(f)$. Similarly, we have a functorial isomorphism
\begin{align}\label{equ:AR3}
\Phi'_{X, Y}\colon D{\rm Ext}_\mathcal{A}^1(\tau^{-1}X, Y) \xrightarrow{\Phi_{\tau^{-1}X, Y}} \overline{\rm Hom}_\mathcal{A}(Y, \tau \tau^{-1}X) \longrightarrow \overline{\rm Hom}_\mathcal{A}(Y, X),
\end{align}
where the right isomorphism is given by $\overline{\rm Hom}_\mathcal{A}(Y, (\eta_X)^{-1})$.

We recall that for a module $M$ over a ring $R$, we denote by ${\rm sub}_{R}\; M$ the poset of finitely generated $R$-submodules of $M$. We observe the following immediate consequence of the Auslander-Reiten duality (\ref{equ:AR2}).

\begin{lem}\label{lem:AR-bij}
Let $\mathcal{A}$ have Auslander-Reiten duality, and let $K, Y$ be objects in $\mathcal{A}$. Then there is a bijection
\begin{align}\label{equ:bij-AR}
\gamma_{K, Y}\colon {\rm sub}_{\Gamma(K)} \; {\rm Ext}_\mathcal{A}^1(Y, K)\longrightarrow {\rm sub}_{\Gamma(K)^{\rm op}} \; \underline{\rm Hom}_\mathcal{A}(\tau^{-1} K, Y)
\end{align}
such that for any $\Gamma(K)$-submodule $L$ of  ${\rm Ext}_\mathcal{A}^1(Y, K)$, $\gamma_{K, Y}(L)=H$ is defined by the following exact sequence
\begin{align}\label{equ:exact}
0\longrightarrow H\stackrel{\rm inc}\longrightarrow \underline{\rm Hom}_\mathcal{A}(\tau^{-1} K, Y)\stackrel{\psi}\longrightarrow DL\longrightarrow 0,
\end{align}
where ``${\rm inc}$" denotes the inclusion and $\psi=D({\rm inc}')\circ (\Psi_{Y, K})^{-1}$. Here, ${\rm inc}'\colon L\rightarrow  {\rm Ext}_\mathcal{A}^1(Y, K)$ is the inclusion. The bijection is an anti-isomorphism of posets.
\end{lem}

\begin{proof}
We use the fact that for a finitely generated module $M$ over an artin algebra $A$, there is a bijection between ${\rm sub}_{A}\; M$ and ${\rm sub}_{A^{\rm op}}\; DM$, sending a submodule $L$ to the kernel of the projection $DM\rightarrow DL$.  Then we identify $D{\rm Ext}_\mathcal{A}^1(Y, K)$ with $\underline{\rm Hom}_\mathcal{A}(\tau^{-1}K, Y)$ via $\Psi_{Y, K}$.
\end{proof}

Let $\xi\colon 0\rightarrow K'\rightarrow X\rightarrow Y\rightarrow 0$ be an exact sequence and let $K$ be an object. We consider the connecting map $c(\xi, K)\colon {\rm Hom}_\mathcal{A}(K', K)\rightarrow {\rm Ext}_\mathcal{A}^1(Y, K)$, which vanishes on $\mathcal{I}(K', K)$. Hence it induces
$$\overline{\rm Hom}_\mathcal{A}(K', K)\longrightarrow {\rm Ext}_\mathcal{A}^1(Y, K),$$
which is still denoted by $c(\xi, K)$. Dually, we have the \emph{connecting map}
 $$c(K, \xi)\colon {\rm Hom}_\mathcal{A}(K, Y)\longrightarrow {\rm Ext}_\mathcal{A}^1(K, K')$$ sending $u\colon K\rightarrow Y$ to $[\xi.u]$. Here, $\xi.u$ denotes the pullback of $\xi$ along $u$. It vanishes on $\mathcal{P}(K, Y)$. Then we have the induced map $$c(K, \xi)\colon \underline{\rm Hom}_\mathcal{A}(K, Y)\longrightarrow {\rm Ext}_\mathcal{A}^1(K, K').$$

We observe the following compatibility property in the Auslander-Reiten duality  (\ref{equ:AR2}) and (\ref{equ:AR3}), which is implicitly contained in the argument of \cite[Section 3]{LZ}; compare \cite[Section 3]{Ji}.

\begin{lem}\label{lem:compat}
Let $\mathcal{A}$ have Auslander-Reiten duality. Then for any exact sequence  $\xi\colon 0\rightarrow K'\rightarrow X\rightarrow Y\rightarrow 0$ and any object $K$, there is a commutative diagram
\[\xymatrix{
D{\rm Ext}_\mathcal{A}^1(Y, K) \ar[d]_{\Psi_{Y, K}} \ar[rr]^{Dc(\xi, K)} && D\overline{\rm Hom}_\mathcal{A}(K', K) \ar[d]^{D\Phi'_{K, K'}}\\
\underline{\rm Hom}_\mathcal{A}(\tau^{-1} K, Y) \ar[rr]^{c(\tau^{-1}K, \xi)} && {\rm Ext}^1_\mathcal{A}(\tau^{-1}K, K').
}\]
In particular, we have an exact sequence
\begin{align}\label{equ:compat}
0\rightarrow {\rm Ker}\; c(\tau^{-1}K, \xi) \stackrel{\rm inc}\longrightarrow \underline{\rm Hom}(\tau^{-1}K, Y) \stackrel{\psi}\longrightarrow D{\rm Im}\; c(\xi, K)\rightarrow 0,
\end{align}
where ``{\rm inc}" denotes the inclusion and $\psi=D({\rm inc}')\circ (\Psi_{Y, K})^{-1}$. Here, ${\rm inc}'\colon {\rm Im}\; c(\xi, K)\rightarrow {\rm Ext}_\mathcal{A}^1(Y, K) $ is the inclusion.
\end{lem}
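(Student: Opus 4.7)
The plan is to verify commutativity of the displayed square at the level of elements, by invoking the bifunctoriality of the Auslander--Reiten isomorphism $\Phi_{X, Y}$ in both variables, together with Yoneda associativity for ${\rm Ext}$; the exact sequence (\ref{equ:compat}) will then follow formally by dualizing the factorization of $c(\xi, K)$ through its image. As preparation I would fix the quasi-inverse $\tau^{-1}$ equipped with natural isomorphisms $\varepsilon\colon \tau^{-1}\tau \to {\rm Id}_{\underline{\mathcal{A}}}$ and $\eta\colon \tau\tau^{-1} \to {\rm Id}_{\overline{\mathcal{A}}}$ satisfying the triangle identities for an adjoint equivalence (in particular $\tau(\varepsilon_Y) = \eta_{\tau Y}$); this is the ``suitable choice'' required by the statement, and it exists by standard category theory, consistent with the discussion in \cite[Section 3]{LZ}.

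The main step is a chase on elements. Starting with $\lambda \in D{\rm Ext}^1_\mathcal{A}(Y, K)$, writing $w = \Phi_{Y, K}(\lambda)$ and $\phi = \Psi_{Y, K}(\lambda) = \varepsilon_Y \circ \tau^{-1}(w)$, the ``down then right'' path produces $[\xi.\phi]$, while ``right then down'', via double duality $DD \cong {\rm Id}$ on finitely generated $k$-modules, gives the element $\omega \in {\rm Ext}^1_\mathcal{A}(\tau^{-1}K, K')$ characterized by $\mu(\omega) = \lambda([u.\xi])$ for all $\mu \in D{\rm Ext}^1_\mathcal{A}(\tau^{-1}K, K')$, where $u = \eta_K \circ \Phi_{\tau^{-1}K, K'}(\mu)$. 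So commutativity reduces to the identity $\mu([\xi.\phi]) = \lambda([u.\xi])$, which I would establish by applying naturality of $\Phi$ in the second variable along $u$ and in the first variable along $\phi$; both resulting expressions are values of $\Phi_{Y, K'}^{-1}(-)$ at $[\xi]$, and by injectivity of $\Phi_{Y, K'}$ the equality collapses to the identity $(\tau\phi)\circ \Phi_{\tau^{-1}K, K'}(\mu) = w \circ u$ in $\overline{\rm Hom}_\mathcal{A}(K', \tau Y)$. This last identity follows by expanding $\tau\phi = \tau(\varepsilon_Y)\circ\tau\tau^{-1}(w)$, applying naturality of $\eta$ to $w$ (so $\tau\tau^{-1}(w) = \eta_{\tau Y}^{-1}\circ w \circ \eta_K$), and using the triangle identity $\tau(\varepsilon_Y) = \eta_{\tau Y}$ to obtain $\tau\phi = w \circ \eta_K$.

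Finally, the exact sequence (\ref{equ:compat}) is immediate: applying the exact functor $D$ to $0 \to {\rm Im}\, c(\xi, K) \to {\rm Ext}^1_\mathcal{A}(Y, K) \to {\rm coker}\, c(\xi, K) \to 0$ produces $0 \to {\rm Ker}\, Dc(\xi, K) \to D{\rm Ext}^1_\mathcal{A}(Y, K) \to D{\rm Im}\, c(\xi, K) \to 0$ with second map $D({\rm inc}')$, and transporting the first two terms along $\Psi_{Y, K}$ yields the displayed sequence. I expect the main obstacle to be the bookkeeping in the reduction step: $\varepsilon$, $\eta$, and the naturality of $\Phi$ in both variables must all be threaded together consistently, which is precisely why the statement insists on a compatible choice of $(\tau^{-1}, \varepsilon)$ chosen at the outset.
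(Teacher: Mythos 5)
Your proposal is correct, but it establishes the commutative square by a genuinely different route from the paper. The paper disposes of the square in one sentence by appealing to the explicit construction of the Auslander--Reiten duality in \cite[Proposition 3.1 and Lemma 3.2]{LZ}; the ``suitable choice'' of $(\tau^{-1},\varepsilon)$ is simply the one produced by that construction. You instead derive the commutativity formally: the element chase correctly reduces it to $\mu([\xi.\phi])=\lambda([u.\xi])$, the two naturality squares of $\Phi$ (in the first variable along $\phi\in\underline{\rm Hom}_\mathcal{A}(\tau^{-1}K,Y)$, in the second along $u\in\overline{\rm Hom}_\mathcal{A}(K',K)$) rewrite both sides as $\Phi_{Y,K'}^{-1}(-)$ evaluated at $[\xi]$, and the identity $\tau\phi=w\circ\eta_K$ follows from naturality of $\eta$ plus the triangle identity $\tau(\varepsilon_Y)=\eta_{\tau Y}$. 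This is legitimate, since naturality of $\Phi$ in both variables is part of the definition of the Auslander--Reiten duality and is used elsewhere in the paper (Propositions \ref{prop:C-deter} and \ref{prop:Ringel}), and an adjoint equivalence can always be arranged. What your approach buys is independence from the explicit construction: the compatibility holds for \emph{any} binatural $\Phi$ once $\tau^{-1}$ is completed to an adjoint equivalence, and it makes precise the identification $\tau\tau^{-1}K\cong K$ via $\eta_K$ that the paper leaves implicit in the right-hand vertical map $D\Phi_{\tau^{-1}K,K'}$. Your derivation of the exact sequence (\ref{equ:compat}) coincides with the paper's. Two cosmetic points: the appeal to injectivity of $\Phi_{Y,K'}$ is not needed (you only use the implication from $(\tau\phi)\circ\Phi_{\tau^{-1}K,K'}(\mu)=w\circ u$ to agreement of the two functionals at $[\xi]$, and you prove the former); and in the last step you should record that ${\rm Ker}\,Dc(\xi,K)={\rm Ker}\,D({\rm inc}')$ because $D$ of the surjection $\overline{\rm Hom}_\mathcal{A}(K',K)\twoheadrightarrow{\rm Im}\,c(\xi,K)$ is injective.
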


\begin{proof}
The compatibility follows from the explicit construction of the Auslander-Reiten duality in \cite[Proposition 3.1 and Lemma 3.2]{LZ}. In particular, the identity in \cite[Lemma 3.2]{LZ} is used.

By the commutative diagram, the kernel of $c(\tau^{-1}K, \xi)$ is identified via $\Psi_{Y, K}$ to the kernel of $D(c(\xi, K))$, which equals  the kernel of $D({\rm inc}')$. This yields the exact sequence  (\ref{equ:compat}).
\end{proof}

For any objects $C$ and $Y$ the following map is well defined
\begin{align*}
\eta_{C, Y}\colon [\longrightarrow Y\rangle_{\rm epi} \longrightarrow {\rm sub}_{\Gamma(C)^{\rm op}} \; \underline{\rm Hom}_\mathcal{A}(C, Y),
\end{align*}
which sends $[\alpha\rangle$ to ${\rm Im}\; \underline{\rm Hom}_\mathcal{A}(C, \alpha)$ for any epimorphism $\alpha\colon X\rightarrow Y$. We observe that ${\rm Im}\; \underline{\rm Hom}_\mathcal{A}(C, \alpha)={\rm Im }\; {\rm Hom}_\mathcal{A}(C, \alpha)/\mathcal{P}(C, Y)$.

For another object $K$, we identify via $\tau^{-1}$ the  $\Gamma(\tau^{-1}K)^{\rm op}$-module structure on $\underline{\rm Hom}_\mathcal{A}(\tau^{-1}K, Y)$ with the corresponding $\Gamma(K)^{\rm op}$-module structure. Hence, by abuse of notation, we identify the poset ${\rm sub}_{\Gamma(\tau^{-1}K)^{\rm op}} \; \underline{\rm Hom}_\mathcal{A}(\tau^{-1}K, Y)$ with ${\rm sub}_{\Gamma(K)^{\rm op}} \; \underline{\rm Hom}_\mathcal{A}(\tau^{-1}K, Y)$. By this identification, we have the bijection (\ref{equ:bij-AR})
$$\gamma_{K, Y}\colon {\rm sub}_{\Gamma(K)} \; {\rm Ext}_\mathcal{A}^1(Y, K)\longrightarrow {\rm sub}_{\Gamma(C)^{\rm op}} \; \underline{\rm Hom}_\mathcal{A}(C, Y),$$
where $C=\tau^{-1}K$.

We obtain a commutative triangle via the Auslander-Reiten duality.

\begin{prop}\label{prop:comm}
Let $\mathcal{A}$ be a Hom-finite $k$-linear abelian category having Auslander-Reiten duality, and let $K, Y$ be objects. Set $C=\tau^{-1} K$. Then the following triangle is commutative.
\[\xymatrix @R=1.5pc @C=0.8pc{
&  {{\rm sub}_{\Gamma(C)^{\rm op}} \; \underline{\rm Hom}_\mathcal{A}(C, Y)}  \\
[\longrightarrow Y\rangle_{\rm epi}  \ar[ru]^{\eta_{C,Y}}  \ar[rr]^{\delta_{K, Y}}&& {\rm sub}_{\Gamma(K)}\; {\rm Ext}^1_\mathcal{A}(Y, K) \ar[lu]_{\gamma_{K, Y}}
}\]
\end{prop}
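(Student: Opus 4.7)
The plan is to chase a single class $[\alpha\rangle$ around the triangle, where $\alpha\colon X\to Y$ is an epimorphism with associated short exact sequence $\xi_\alpha\colon 0\to K'\to X\stackrel{\alpha}\to Y\to 0$, and show directly that $\gamma_{K,Y}(\delta_{K,Y}([\alpha\rangle)) = \eta_{C,Y}([\alpha\rangle)$. Put $L=\delta_{K,Y}([\alpha\rangle) = {\rm Im}\, c(\xi_\alpha, K)$, which is a $\Gamma(K)$-submodule of ${\rm Ext}^1_\mathcal{A}(Y,K)$ that is finitely generated because ${\rm Ext}^1_\mathcal{A}(Y,K)$ is a finitely generated $k$-module (hence a finitely generated $\Gamma(K)$-module) by Auslander-Reiten duality.

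The first key step is to match two descriptions of the same subspace of $\underline{\rm Hom}_\mathcal{A}(C,Y)$. On the one hand, Lemma \ref{lem:AR-bij} describes $\gamma_{K,Y}(L)$ as the kernel of the composite $\psi = D({\rm inc}')\circ (\Psi_{Y,K})^{-1}$ where ${\rm inc}'\colon L\hookrightarrow {\rm Ext}^1_\mathcal{A}(Y,K)$. On the other hand, Lemma \ref{lem:compat} applied to $\xi_\alpha$ gives the exact sequence
\[
0\longrightarrow {\rm Ker}\, c(C,\xi_\alpha) \longrightarrow \underline{\rm Hom}_\mathcal{A}(C,Y) \stackrel{\psi}\longrightarrow D({\rm Im}\, c(\xi_\alpha, K)) \longrightarrow 0,
\]
in which the right-hand map is literally the same $\psi$ (with target $DL$). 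Comparing, we obtain $\gamma_{K,Y}(L) = {\rm Ker}\, c(C,\xi_\alpha)$ as submodules of $\underline{\rm Hom}_\mathcal{A}(C,Y)$.

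The second step is to identify ${\rm Ker}\, c(C,\xi_\alpha)$ with ${\rm Im}\,\underline{\rm Hom}_\mathcal{A}(C,\alpha) = \eta_{C,Y}([\alpha\rangle)$. Apply ${\rm Hom}_\mathcal{A}(C,-)$ to $\xi_\alpha$; the resulting long exact sequence shows that, at the level of ${\rm Hom}_\mathcal{A}(C,Y)$, the kernel of the connecting map $c(C,\xi_\alpha)\colon {\rm Hom}_\mathcal{A}(C,Y)\to {\rm Ext}^1_\mathcal{A}(C,K')$ equals ${\rm Im}\,{\rm Hom}_\mathcal{A}(C,\alpha)$. Since $\alpha$ is an epimorphism, every projectively trivial map $C\to Y$ factors through $\alpha$, so $\mathcal{P}(C,Y)\subseteq {\rm Im}\,{\rm Hom}_\mathcal{A}(C,\alpha)$, and passing to the quotient by $\mathcal{P}(C,Y)$ yields ${\rm Ker}\, c(C,\xi_\alpha) = {\rm Im}\,\underline{\rm Hom}_\mathcal{A}(C,\alpha)$ inside $\underline{\rm Hom}_\mathcal{A}(C,Y)$. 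Combining both steps gives $\gamma_{K,Y}\circ \delta_{K,Y} = \eta_{C,Y}$.

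I do not expect any serious obstacle: the whole argument is a transport of structure, and all of the substantive content has already been packed into Lemma \ref{lem:compat} (the naturality of Auslander-Reiten duality with respect to the connecting maps). The only point that requires a moment's care is checking that the map $\psi$ appearing in Lemma \ref{lem:AR-bij} and the map $\psi$ appearing in Lemma \ref{lem:compat} are the \emph{same} map, so that the two exact sequences can be superimposed; this reduces to the definition of both as $D({\rm inc}')\circ (\Psi_{Y,K})^{-1}$, where one must verify that ${\rm Im}\, D(c(\xi_\alpha, K)) = D({\rm Im}\, c(\xi_\alpha, K)) = DL$ via the canonical identification, which is immediate from the exactness of $D$ on finitely generated $k$-modules.
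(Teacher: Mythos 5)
Your proposal is correct and follows essentially the same route as the paper: both arguments identify $\eta_{C,Y}([\alpha\rangle)$ with ${\rm Ker}\; c(C,\xi_\alpha)$ via the long exact sequence for ${\rm Hom}_\mathcal{A}(C,-)$ applied to $\xi_\alpha$, and then use the exact sequences of Lemma \ref{lem:AR-bij} and Lemma \ref{lem:compat} to see that this kernel equals $\gamma_{K,Y}({\rm Im}\; c(\xi_\alpha,K))=\gamma_{K,Y}(\delta_{K,Y}([\alpha\rangle))$. Your explicit check that the two maps $\psi$ coincide is exactly the point the paper compresses into ``by (\ref{equ:exact}) and (\ref{equ:compat})''.
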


\begin{proof}
Let $\alpha\colon X\rightarrow Y$ be an epimorphism. Consider the exact sequence $\xi_\alpha\colon 0\rightarrow {\rm Ker}\; \alpha\rightarrow X\rightarrow Y\rightarrow 0$. The following sequence is exact
$$\underline{\rm Hom}_\mathcal{A}(C, X)\xrightarrow{\underline{\rm Hom}_\mathcal{A}(C, \alpha)} \underline{\rm Hom}_\mathcal{A}(C, Y)\xrightarrow{c(C, \xi_\alpha)}  {\rm Ext}^1_\mathcal{A}(C, {\rm Ker}\; \alpha).$$
We have $\eta_{C, Y}([\alpha\rangle)={\rm Im}\;\underline{\rm Hom}_\mathcal{A}(C, \alpha)={\rm Ker}\; c(C, \xi_\alpha)$. Recall that $\delta_{K, Y}([\alpha\rangle)={\rm Im}\; c(\xi_\alpha, K)$. Then $\gamma_{K, Y}({\rm Im}\; c(\xi_\alpha, K))={\rm Ker}\; c(C, \xi_\alpha)$ by (\ref{equ:exact}) and (\ref{equ:compat}). Then we are done with $\eta_{C, Y}=\gamma_{K, Y}\circ \delta_{K, Y}$.
\end{proof}

The following important characterization of right determined epimorphisms in the module category over an artin algebra is contained in \cite[X.2]{ARS} (also see \cite{Rin1}), while the corresponding result in an abelian category with Serre duality is partly contained in \cite[Remark 3.5]{ChL}.

\begin{prop}\label{prop:C-deter}
Let $\mathcal{A}$ be a Hom-finite $k$-linear abelian category having Auslander-Reiten duality. Let $\alpha\colon X\rightarrow Y$ be an epimorphism with $K={\rm Ker}\; \alpha$. Then $\alpha$ is right $\tau^{-1} K$-determined. Moreover, if $\alpha$ is right minimal, $\alpha$ is right $C$-determined for some object $C$ if and only if $\tau^{-1} K\in {\rm add}\; C$. Consequently, we have \begin{align*}
_K[\longrightarrow Y\rangle_{\rm epi}={^{\tau^{-1}K}[\longrightarrow Y\rangle_{\rm epi}}.
\end{align*}
\end{prop}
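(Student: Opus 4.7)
The plan is to handle the three assertions in sequence, with (a) being the technical core established via Lemma \ref{lem:compat}, (b) following by combining (a) with the bijection triangle of Proposition \ref{prop:comm}, and (c) being a formal consequence.

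For part (a), set $C=\tau^{-1}K$ and take $t\colon T\to Y$ satisfying the hypothesis. I want to show $t$ factors through $\alpha$, equivalently $[\xi_\alpha.t]=0$ in ${\rm Ext}^1_\mathcal{A}(T,K)$. Using the identity $[\xi_\alpha.(t\circ\phi)]=[(\xi_\alpha.t).\phi]$ for every $\phi\colon C\to T$, the hypothesis says precisely that the connecting map $c(C,\xi')\colon\underline{\rm Hom}_\mathcal{A}(C,T)\to{\rm Ext}^1_\mathcal{A}(C,K)$ vanishes, where $\xi':=\xi_\alpha.t\colon 0\to K\to\cdot\to T\to 0$ is the pullback sequence. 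Applying Lemma \ref{lem:compat} to $\xi'$ and the object $K$, the vertical isomorphisms $\Psi_{T,K}$ and $D\Phi_{C,K}$ identify $c(C,\xi')$ with $Dc(\xi',K)$; by nondegeneracy of the Matlis duality $D$ on finitely generated $k$-modules, this is equivalent to $c(\xi',K)=0$, which upon evaluation at $\overline{{\rm id}}_K$ yields $[\xi']=0$.

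For part (b), the ``if'' direction combines (a) with the elementary fact that right $C_1$-determined implies right $C_2$-determined whenever $C_1\in{\rm add}\;C_2$: any $\phi\colon C_1\to T$ factors as $\tilde\phi\circ i$ through a split monomorphism $i\colon C_1\to C_2^n$, so the hypothesis on $t\circ\phi$ follows from the hypothesis on the components $C_2\to T$ of $\tilde\phi$. For the ``only if'' direction, set $H:=\eta_{C,Y}([\alpha\rangle)\subseteq\underline{\rm Hom}_\mathcal{A}(C,Y)$, $\widetilde K:=\tau C$, and $L:=\gamma_{\widetilde K,Y}^{-1}(H)\subseteq{\rm Ext}^1_\mathcal{A}(Y,\widetilde K)$ via Lemma \ref{lem:AR-bij}. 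By Proposition \ref{prop:uni-ext}(3), a universal $L$-extension $\xi'\colon 0\to K'\to X'\stackrel{\alpha'}\to Y\to 0$ exists with $K'\in{\rm add}\;\widetilde K={\rm add}(\tau C)$, in which $\alpha'$ is automatically right minimal by Proposition \ref{prop:uni-ext}(1). Proposition \ref{prop:comm} applied to the pair $(\widetilde K,Y)$, whose associated ``$C$'' is $\tau^{-1}\widetilde K\cong C$ in $\underline{\mathcal{A}}$, gives $\eta_{C,Y}([\alpha'\rangle)=\gamma_{\widetilde K,Y}(L)=H=\eta_{C,Y}([\alpha\rangle)$. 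The already-proved ``if'' direction, applied to $\alpha'$ with $\tau^{-1}K'\in{\rm add}\;C$, shows $\alpha'$ is right $C$-determined, so by the injectivity of $\eta_{C,Y}$ on right $C$-determined classes (recalled in the introduction), $\alpha$ and $\alpha'$ are right equivalent. Two right equivalent right minimal morphisms are isomorphic as morphisms, hence $K\cong K'\in{\rm add}(\tau C)$ and $\tau^{-1}K\in{\rm add}\;C$.

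Part (c) is then formal: for $_K[\longrightarrow Y\rangle_{\rm epi}\subseteq{^{\tau^{-1}K}[\longrightarrow Y\rangle_{\rm epi}}$, if a representative has kernel in ${\rm add}\;K$ then (a) together with (b)-``if'' shows it is right $\tau^{-1}K$-determined; for the reverse inclusion, pick a right minimal representative (which exists in the Auslander--Reiten setting by Corollary \ref{cor:rerm}, since each $\Gamma(K)$-module admits a projective cover over the artin algebra $\Gamma(K)$) and apply (b)-``only if'' to place its kernel in ${\rm add}\;K$. The main obstacle is the ``only if'' direction of (b): without right minimality the kernel of $\alpha$ carries no canonical ${\rm add}(\tau C)$-structure, so the proof must produce a right minimal companion $\alpha'$ realizing the same submodule $H$ through the universal-extension machinery of Section 3, and only then transfer the kernel data back to $\alpha$ via the uniqueness of right minimal representatives inside a right equivalence class.
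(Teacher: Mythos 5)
Your proposal is correct, and while the first assertion is proved essentially as in the paper (you route the computation through Lemma \ref{lem:compat} where the paper invokes the naturality of $\Psi$ in the first variable directly --- these are the same calculation), your argument for the ``only if'' half of the second assertion is genuinely different. The paper works locally: for each indecomposable summand $K'$ of ${\rm Ker}\,\alpha$ it takes the almost split sequence $0\to K'\to E\to \tau^{-1}K'\to 0$, uses right minimality to see that $K'\to X$ is non-split, and derives a contradiction from the assumption $\tau^{-1}K'\notin{\rm add}\,C$. You instead argue globally: you manufacture, via Proposition \ref{prop:uni-ext}(3), a universal $L$-extension with kernel in ${\rm add}\,(\tau C)$ realizing the same submodule $H=\eta_{C,Y}([\alpha\rangle)$ through the commutative triangle of Proposition \ref{prop:comm}, then use the injectivity of $\eta_{C,Y}$ on right $C$-determined classes (\cite[Proposition 4.3]{Rin2}) and the uniqueness of right minimal representatives to force $K\cong K'$. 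Your route avoids almost split sequences entirely and reuses the machinery of Sections 2--4, at the price of leaning on the external injectivity result and on a chain of identifications ($\tau^{-1}\tau C\cong C$ in $\underline{\mathcal{A}}$, transfer of module structures, and the stated convention that ``$\tau^{-1}K\in{\rm add}\,C$'' ignores projective summands); these all check out but deserve a sentence each in a written version. The paper's local argument buys slightly more for free --- it exhibits each indecomposable summand of the kernel as non-injective and pins down its image under $\tau^{-1}$ directly --- and is self-contained within the Auslander--Reiten theory, whereas yours is shorter once the bijection triangle is in place.
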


Here, we use the following convention: in considering the statement ``$\tau^{-1} K\in {\rm add}\; C$" in $\mathcal{A}$, we mean that all the non-projective direct summands of $\tau^{-1}K$ lie in ${\rm add}\; C$.

\begin{proof}
For the first statement, we consider a morphism $t\colon T\rightarrow Y$ such that any morphism  $\phi\colon \tau^{-1} K\rightarrow T$ satisfies that $t\circ \phi$ factors through $\alpha$. To show that $t$ factors through $\alpha$ is equivalent to proving that the element $[\xi.t]$ in ${\rm Ext}^1_\mathcal{A}(T, K)$ is zero, where $\xi\colon 0\rightarrow K\rightarrow X\stackrel{\alpha}\rightarrow Y\rightarrow 0$ is the exact sequence corresponding to $\alpha$ and $\xi. t$ denotes the pullback of $\xi$ along $t$. By the isomorphism (\ref{equ:AR2}), it amounts to proving that the element $(D\Psi_{T, K})^{-1}([\xi.t])\in D\underline{\rm Hom}_\mathcal{A}(\tau^{-1}K, T)$ is zero.

 We assume that this is not the case. Then there exists a morphism $\phi\colon \tau^{-1}K\rightarrow T$ such that $(D\Psi_{T, K})^{-1}([ \xi.t])(\phi)\neq 0$. By the naturalness of $\Psi$ on the first variable, we compute that
\begin{align*}
(D\Psi_{T, K})^{-1}([\xi.t])(\phi)=(D\Psi_{\tau^{-1} K, K})^{-1}([\xi.(t\circ \phi)])({\rm Id}_{\tau^{-1}K}).
\end{align*}
But $t\circ \phi$ factors through $\alpha$ and thus $[\xi.(t\circ \phi)]=0$. This is a contradiction!

It remains to prove the ``only if" part of the second statement, since the ``if" part is trivial by the first statement. We claim that each indecomposable direct summand $K'$ of $K$ satisfies $\tau^{-1} K'\in {\rm add}\; C$.

For the claim, we observe that the composite inclusion $K'\stackrel{i}\rightarrow K\rightarrow X$ is not split, since $\alpha$ is right minimal; in particular, the object $K'$ is not injective. Consider the almost split sequence $\xi'\colon 0\rightarrow K'\stackrel{f}\rightarrow E\stackrel{g}\rightarrow \tau^{-1} K'\rightarrow 0$ starting at $K'$; see \cite[Section 4]{LZ}. Then the inclusion $K'\rightarrow X$ factors through $f$. We have the following commutative diagram
\[\xymatrix{
\xi'\colon \quad  0\ar[r] & K' \ar[d]^i \ar[r]^f & E\ar[r]^-g \ar@{.>}[d] & \tau^{-1} K'\ar[r]\ar@{.>}[d]^-t & 0\\
\xi\colon \quad  0 \ar[r] & K\ar[r] & X \ar[r]^\alpha & Y \ar[r] & 0.
}\]
In particular, we have $[i.\xi']=[\xi.t]$. We assume on the contrary that $\tau^{-1}K'$ does not lie in ${\rm add}\; C$. Then any morphism $\phi\colon C\rightarrow \tau^{-1}K'$ is not split epic, and thus factors through $g$. It follows that $t\circ \phi$ factor through $\alpha$. By the assumption that $\alpha$ is right $C$-determined, we infer that $t$ factors through $\alpha$, and thus the pullback $\xi.t$ of $\xi$ along $t$ is a split exact sequence. Then the pushout $i.\xi'$ of $\xi'$ along $i$ is split, and thus the split monomorphism $i$ factors through $f$. This is impossible, since $f$ is not split mono.
\end{proof}

We summarize the results in the following commutative bijection triangle, which involves the restricted Auslander bijections, universal extensions and the Auslander-Reiten duality.

\begin{thm}\label{thm:1}
Let $\mathcal{A}$ be a Hom-finite $k$-linear abelian category having Auslander-Reiten duality, and let $K, Y$ be objects. Then the following bijection triangle is commutative.
\begin{align}\label{equ:bij-tri}
\xymatrix @R=2pc @C=0.11pc{
&  {{\rm sub}_{\Gamma(\tau^{-1}K)^{\rm op}} \; \underline{\rm Hom}_\mathcal{A}(\tau^{-1}K, Y)}  \\
{_K[\longrightarrow Y\rangle_{\rm epi}}={^{\tau^{-1}K}[\longrightarrow Y\rangle_{\rm epi}}  \ar[ru]^{\eta_{\tau^{-1}K,Y}}  \ar[rr]^{\delta_{K, Y}}&& {\rm sub}_{\Gamma(K)}\; {\rm Ext}^1_\mathcal{A}(Y, K) \ar[lu]_{\gamma_{K, Y}}
}\end{align}
In particular, we have the restricted Auslander bijection at $Y$ relative to $\tau^{-1}K$
\begin{align}\label{equ:rabijec}
\eta_{\tau^{-1}K, Y}\colon {^{\tau^{-1}K}[\longrightarrow Y\rangle_{\rm epi}} \longrightarrow {{\rm sub}_{\Gamma(\tau^{-1}K)^{\rm op}} \; \underline{\rm Hom}_\mathcal{A}(\tau^{-1}K, Y)},
\end{align}
which is an isomorphism of posets.
\end{thm}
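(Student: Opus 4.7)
The plan is to assemble the three sides of the bijection triangle from results already established and read off the last statement as a corollary of the composition. Proposition~\ref{prop:delta} supplies the anti-isomorphism $\delta_{K,Y}\colon {_K[\longrightarrow Y\rangle_{\rm epi}} \to {\rm sub}_{\Gamma(K)}\; {\rm Ext}^1_\mathcal{A}(Y, K)$, and Lemma~\ref{lem:AR-bij} supplies the anti-isomorphism $\gamma_{K,Y}$, where the $\Gamma(C)^{\rm op}$-structure on $\underline{\rm Hom}_\mathcal{A}(C, Y)$ is identified with the $\Gamma(K)^{\rm op}$-structure via $\tau$ (as done just before Proposition~\ref{prop:comm}). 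The commutativity $\eta_{C,Y} = \gamma_{K,Y} \circ \delta_{K,Y}$ on $_K[\longrightarrow Y\rangle_{\rm epi}$ is exactly Proposition~\ref{prop:comm}.

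The substantive step is to verify that the two natural descriptions of the domain agree, namely
\[
{_K[\longrightarrow Y\rangle_{\rm epi}} \;=\; {^C[\longrightarrow Y\rangle_{\rm epi}},
\]
which is the consequence displayed in Proposition~\ref{prop:C-deter}. For ``$\subseteq$'', given a representative $\alpha$ with ${\rm Ker}\,\alpha \in {\rm add}\, K$, the first clause of Proposition~\ref{prop:C-deter} makes $\alpha$ right $\tau^{-1}({\rm Ker}\,\alpha)$-determined; since $\tau^{-1}({\rm Ker}\,\alpha)$ lies in ${\rm add}\, C$ modulo projective summands, a standard summand-closure argument for right determination promotes this to right $C$-determination. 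For the reverse inclusion, choose a right minimal representative $\alpha'$ of $[\alpha\rangle$, which exists by Corollary~\ref{cor:rerm} together with the existence of projective covers over the artin algebra $\Gamma({\rm Ker}\,\alpha')$. The second clause of Proposition~\ref{prop:C-deter} then forces $\tau^{-1}({\rm Ker}\,\alpha') \in {\rm add}\, C$, and applying $\tau$ together with the paper's convention on projective direct summands yields ${\rm Ker}\,\alpha' \in {\rm add}\, K$.

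With the domains identified, the restricted Auslander bijection is immediate: $\eta_{C,Y}$ equals the composition $\gamma_{K,Y} \circ \delta_{K,Y}$ of two anti-isomorphisms of posets, hence is itself an isomorphism of posets onto ${\rm sub}_{\Gamma(C)^{\rm op}}\; \underline{\rm Hom}_\mathcal{A}(C, Y)$. The only delicate point is the bookkeeping in the second step, namely matching ``having a representative whose kernel lies in ${\rm add}\, K$'' with ``being right $C$-determined'', which ultimately rests on Proposition~\ref{prop:C-deter} together with the convention that the statement $\tau^{-1}K \in {\rm add}\, C$ ignores projective direct summands of $\tau^{-1}K$. Everything else in the theorem is a direct repackaging of earlier propositions.
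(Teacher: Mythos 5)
Your proposal is correct and follows essentially the same route as the paper: the proof of Theorem~\ref{thm:1} is exactly the assembly of Proposition~\ref{prop:comm} (commutativity), Proposition~\ref{prop:C-deter} (the identification ${_K[\longrightarrow Y\rangle_{\rm epi}}={^C[\longrightarrow Y\rangle_{\rm epi}}$, which the paper simply cites rather than re-deriving as you do), and the fact that $\delta_{K,Y}$ and $\gamma_{K,Y}$ are anti-isomorphisms of posets. Your extra unpacking of the domain equality, including the care with projective direct summands and right minimal representatives, is consistent with the paper's convention and does not change the argument.
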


\begin{proof}
The equality at the left corner is contained in Proposition \ref{prop:C-deter}, while the commutativity is proved in Proposition \ref{prop:comm}. Since both the bijections $\delta_{K, Y}$ and $\gamma_{K, Y}$ are anti-isomorphisms of posets, it follows that $\eta_{\tau^{-1}K, Y}$ is an isomorphism of posets.
\end{proof}

\section{Consequences of the bijection triangle}

In this section, we draw some consequences of the bijection triangle (\ref{equ:bij-tri}). Universal extensions play a fundamental role in the restricted Auslander bijections. We verify a conjecture by Ringel in \cite{Rin2}; see Proposition \ref{prop:Ringel} and Corollary \ref{cor:Ringel}.

Throughout $k$ is a commutative artinian ring and $\mathcal{A}$ is a Hom-finite $k$-linear abelian category having Auslander-Reiten duality (\ref{equ:AR}). We recall that any morphism $\alpha\colon X\rightarrow Y$ is right equivalent to a right minimal morphism $\alpha'\colon X'\rightarrow Y$, in other words, $[\alpha\rangle=[\alpha'\rangle$ in $[\longrightarrow Y\rangle$; consult Corollary \ref{cor:rerm}. In what follows, we mainly consider right minimal epimorphisms.

Let $K, Y$ be objects in $\mathcal{A}$. Set $C=\tau^{-1}K$. We recall the restricted Auslander bijection (\ref{equ:rabijec}) at $Y$ relative to $C$
\begin{align*}
\eta_{C, Y}\colon {^C[\longrightarrow Y\rangle_{\rm epi}} \longrightarrow {{\rm sub}_{\Gamma(C)^{\rm op}} \; \underline{\rm Hom}_\mathcal{A}(C, Y)}.
\end{align*}

 The following result implies that universal extensions arise naturally in the restricted Auslander bijection; a special case is already  pointed out in \cite[Proposition 9.3]{Rin2}. Recall that universal extensions in $\mathcal{A}$ exist by Proposition \ref{prop:uni-ext}(3).

\begin{prop}
Let $\alpha\colon X\rightarrow Y$ be a right minimal epimorphism. Then the following statements are equivalent:
\begin{enumerate}
\item the morphism $\alpha$ is right $C$-determined and $\eta_{C, Y}([\alpha\rangle)=0$;
\item  ${\rm Ker}\; \alpha\in {\rm add}\; K$ and ${\rm Im}\; {\rm Hom}_\mathcal{A}(C, \alpha)=\mathcal{P}(C, Y)$;
\item the corresponding exact sequence $\xi_\alpha\colon 0\rightarrow {\rm Ker}\; \alpha\rightarrow X\stackrel{\alpha}\rightarrow Y\rightarrow 0$ is a universal extension of $Y$ by $K$.
    \end{enumerate}
\end{prop}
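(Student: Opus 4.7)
The plan is to run all three conditions through the bijection triangle (\ref{equ:bij-tri}) of Theorem \ref{thm:1}, so that the equivalences reduce to tracing where each clause lands under the three maps $\eta_{C,Y}$, $\delta_{K,Y}$, $\gamma_{K,Y}$.

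First I would separate each condition into two clauses: a ``membership'' clause (a statement about what class $[\alpha\rangle$ lives in) and a ``value'' clause (a statement about the image of that class under one of the three maps). For the membership clauses I would argue that under the standing assumption that $\alpha$ is a right minimal epimorphism, the three assertions ``$\alpha$ is right $C$-determined'', ``$\ker\alpha\in\mathrm{add}\,K$'' and (UE1) all coincide. The equivalence of the first two is exactly the equality ${_K[\longrightarrow Y\rangle_{\mathrm{epi}}}={^C[\longrightarrow Y\rangle_{\mathrm{epi}}}$ from Theorem \ref{thm:1}, after noting that for a right minimal representative, the existence of \emph{some} representative with kernel in $\mathrm{add}\,K$ forces $\ker\alpha$ itself to lie in $\mathrm{add}\,K$; this last step is the same argument used in Corollary \ref{cor:rerm}, where right minimality forces $\ker\alpha$ to be a direct summand of the kernel of any right equivalent epimorphism.

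Next I would handle the value clauses. The clause $\eta_{C,Y}([\alpha\rangle)=0$ in (1) unwinds to $\mathrm{Im}\,\mathrm{Hom}_\mathcal{A}(C,\alpha)/\mathcal{P}(C,Y)=0$, and since $\alpha$ is epic we always have $\mathcal{P}(C,Y)\subseteq\mathrm{Im}\,\mathrm{Hom}_\mathcal{A}(C,\alpha)$, so this is literally the second clause of (2). For (3), the conditions (UE1) and (UE3) are already accounted for (the latter via Proposition \ref{prop:rm} using right minimality of $\alpha$), so the remaining content of (3) is (UE2), namely $\delta_{K,Y}([\alpha\rangle)=\mathrm{Ext}^1_\mathcal{A}(Y,K)$.

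The key step is then to invoke the commutativity of the triangle: $\eta_{C,Y}([\alpha\rangle)=\gamma_{K,Y}(\delta_{K,Y}([\alpha\rangle))$. Since $\gamma_{K,Y}$ is an anti-isomorphism of posets, it exchanges the top and bottom elements; the bottom of $\mathrm{sub}_{\Gamma(C)^{\mathrm{op}}}\,\underline{\mathrm{Hom}}_\mathcal{A}(C,Y)$ is $0$, and the top of $\mathrm{sub}_{\Gamma(K)}\,\mathrm{Ext}^1_\mathcal{A}(Y,K)$ is the whole module. To be explicit one can read off $\gamma_{K,Y}(\mathrm{Ext}^1_\mathcal{A}(Y,K))=0$ directly from the exact sequence (\ref{equ:exact}) in Lemma \ref{lem:AR-bij}, because when $L=\mathrm{Ext}^1_\mathcal{A}(Y,K)$ the map $\psi$ is a bijection. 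Hence $\eta_{C,Y}([\alpha\rangle)=0$ if and only if $\delta_{K,Y}([\alpha\rangle)=\mathrm{Ext}^1_\mathcal{A}(Y,K)$, which finishes the equivalence of the value clauses and thereby of (1), (2), (3).

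The only delicate point is the membership reduction in the first paragraph: one must be careful that ``right $C$-determined right minimal epimorphism'' really does force $\ker\alpha\in\mathrm{add}\,K$ (not merely for some representative), since the definition of $_K[\longrightarrow Y\rangle_{\mathrm{epi}}$ quantifies existentially over representatives. Once this is in place, the rest is a mechanical unfolding of the triangle, with Proposition \ref{prop:rm} supplying the right-minimality equivalence needed to identify (UE3) with the hypothesis on $\alpha$.
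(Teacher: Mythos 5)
Your proposal is correct and follows essentially the same route as the paper's proof: reduce (1)$\Leftrightarrow$(2) to the identification of ``right $C$-determined'' with ``$\mathrm{Ker}\,\alpha\in\mathrm{add}\,K$'' for a right minimal epimorphism (the paper cites Proposition \ref{prop:C-deter} directly, while you pass through the corner equality of Theorem \ref{thm:1} plus the direct-summand argument from Corollary \ref{cor:rerm}, which amounts to the same thing), and then obtain (1)$\Leftrightarrow$(3) from the commutativity of the bijection triangle together with $\gamma_{K,Y}(\mathrm{Ext}^1_\mathcal{A}(Y,K))=0$ and Proposition \ref{prop:uni-ext}(1). Your explicit attention to the existential quantifier in the definition of ${_K[\longrightarrow Y\rangle_{\rm epi}}$ is a sound and welcome precaution, but does not change the substance of the argument.
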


 \begin{proof}
 Recall that $\mathcal{P}(C, Y)\subseteq {\rm Im}\; {\rm Hom}_\mathcal{A}(C, \alpha)$, since each projectively trivial morphism factors through any epimorphism. Hence, $\eta_{C, Y}([\alpha\rangle)=0$ if and only if ${\rm Im}\; {\rm Hom}_\mathcal{A}(C, \alpha)=\mathcal{P}(C, Y)$. By Proposition \ref{prop:C-deter}, $\alpha$ is right $C$-determined if and only if ${\rm Ker}\; \alpha\in {\rm add}\; K$. Then ``$(1)\Leftrightarrow(2)$" follows.

 We observe that $\gamma_{K, Y} ({\rm Ext}_\mathcal{A}^1(Y, K))=0$. It follows from the bijection triangle (\ref{equ:bij-tri}) that  $\eta_{C, Y}([\alpha\rangle)=0$ if and only if $\delta_{K, Y}([\alpha\rangle)={\rm Ext}_\mathcal{A}^1(Y, K)$, which is further equivalent to that $\xi_\alpha$ is a semi-universal extension of $Y$ by $K$. Then  ``$(1)\Leftrightarrow(3)$" follows from Proposition \ref{prop:uni-ext}(1).
 \end{proof}

Following \cite[Section 11]{Rin2}, we say that an object $X$ is \emph{present} in ${^{C}[\longrightarrow Y\rangle_{\rm epi}}$ provided that there exists a right minimal epimorphism $X\rightarrow Y$ that is right $C$-determined. These objects are somehow controlled by universal extensions.  The following result is in spirit close to \cite[Proposition 11.1]{Rin2}.

\begin{prop}
Fix a universal extension $0\rightarrow \bar{K}\rightarrow \bar{X}\rightarrow Y\rightarrow 0$ of $Y$ by $K$. Then there exists a natural number $n=n(K, Y)$ with the following property: any object $X$ present in ${^C[\longrightarrow Y\rangle_{\rm epi}}$ fits into an exact sequence
\begin{align*}
0\longrightarrow \bar{K}\longrightarrow \bar{X}\oplus K'\longrightarrow X\longrightarrow 0
\end{align*}
for a direct summand $K'$ of $K^n$. In particular, $X$ is a quotient object of $\bar{X}\oplus K^n$.
\end{prop}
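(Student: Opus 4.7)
The plan is to assemble the desired exact sequence from the comparison between $\bar\xi$ and the minimal epimorphism witnessing the presence of $X$, and then to control the size of the kernel summand using the finite length of ${\rm Ext}^1_\mathcal A(Y,K)$ as a $\Gamma(K)$-module.

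First, choose a right minimal epimorphism $\alpha\colon X\to Y$ that is right $C$-determined, and let $\xi_\alpha\colon 0\to K_1\to X\stackrel{\alpha}\to Y\to 0$ with $K_1={\rm Ker}\,\alpha$; by Proposition \ref{prop:C-deter}, $K_1\in{\rm add}\,K$. Since $\bar\xi$ is in particular a semi-universal extension with ${\rm Im}\,c(\bar\xi,K)={\rm Ext}^1_\mathcal A(Y,K)$, the trivial inclusion ${\rm Im}\,c(\xi_\alpha,K)\subseteq{\rm Im}\,c(\bar\xi,K)$, combined with Lemma \ref{lem:factor}, provides morphisms $u\colon\bar K\to K_1$ and $v\colon\bar X\to X$ with $\alpha\circ v=\bar\alpha$ and $u$ the induced map on kernels. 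Define $\phi\colon\bar X\oplus K_1\to X$ by $\phi(a,b)=v(a)-\iota(b)$, where $\iota\colon K_1\hookrightarrow X$ is the canonical inclusion. A direct chase shows that $\phi$ is epic: given $x\in X$, lift $\alpha(x)$ along $\bar\alpha$ to some $\bar x\in\bar X$, so that $v(\bar x)-x\in K_1$ and $\phi(\bar x,v(\bar x)-x)=x$. Its kernel is precisely $\{(a,u(a))\mid a\in\bar K\}\cong\bar K$, yielding the exact sequence $0\to\bar K\to\bar X\oplus K_1\to X\to 0$.

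For the uniform bound $n=n(K,Y)$, I exploit the right minimality of $\alpha$: by the proof of Corollary \ref{cor:rerm}, the induced map ${\rm Hom}_\mathcal A(K_1,K)\to{\rm Im}\,c(\xi_\alpha,K)$ is a projective cover of $\Gamma(K)$-modules. Under the duality ${\rm Hom}_\mathcal A(-,K)\colon{\rm add}\,K\to\Gamma(K)\mbox{-proj}$, if $K=\bigoplus K_i^{m_i}$ is the decomposition into pairwise non-isomorphic indecomposables and $S_i$ denotes the simple top of $P_i:={\rm Hom}_\mathcal A(K_i,K)$, then the multiplicity of $K_i$ in $K_1$ equals the multiplicity of $S_i$ in the top of ${\rm Im}\,c(\xi_\alpha,K)$, which is bounded by $\ell_{\Gamma(K)}({\rm Ext}^1_\mathcal A(Y,K))$; the latter is finite by Hom-finiteness of $\mathcal A$ and the Auslander-Reiten duality (\ref{equ:AR}). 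Setting $n:=\ell_{\Gamma(K)}({\rm Ext}^1_\mathcal A(Y,K))$, a quantity depending only on $K$ and $Y$, we conclude that $K_1$ is a direct summand of $\bigoplus K_i^n$, and hence of $K^n=\bigoplus K_i^{nm_i}$ (using $m_i\ge 1$). The ``in particular'' clause follows by decomposing $K^n=K_1\oplus K''$ and composing $\phi$ with the projection $\bar X\oplus K^n\twoheadrightarrow\bar X\oplus K_1$.

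The main obstacle is locating the uniform bound: the construction of the exact sequence is a routine diagram chase using the comparison morphism $v$, but securing a single $n=n(K,Y)$ that works for all present $X$ requires recognizing that right minimality forces ${\rm Hom}_\mathcal A(K_1,K)$ to be a projective cover, so that the multiplicities of the indecomposable summands of $K_1$ are controlled by composition multiplicities inside the fixed finite-length $\Gamma(K)$-module ${\rm Ext}^1_\mathcal A(Y,K)$.
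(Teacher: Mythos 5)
Your proof is correct and follows essentially the same route as the paper: the exact sequence comes from the bicartesian left square of the comparison morphism from the universal extension to $\xi_\alpha$ (which you spell out element-wise), and the uniform bound comes from right minimality forcing ${\rm Hom}_\mathcal{A}({\rm Ker}\,\alpha, K)\rightarrow {\rm Im}\; c(\xi_\alpha, K)$ to be a projective cover, so that ${\rm Ker}\,\alpha$ is a direct summand of $K^n$ with $n$ controlled by the length of ${\rm Ext}^1_\mathcal{A}(Y,K)$. The only (immaterial) difference is that you take $n$ to be the $\Gamma(K)$-length and count multiplicities of simple tops, whereas the paper takes $n$ to be the $k$-length so that ${\rm Hom}_\mathcal{A}(K^n,K)$ surjects onto every submodule $L$.
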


Here, $K^n$ denotes the direct sum of $n$ copies of $K$.

\begin{proof}
We observe that there exists a natural number $n$ such that for any $\Gamma(K)$-submodule $L$ of ${\rm Ext}^1_\mathcal{A}(Y, K)$ there is an epimorphism ${\rm Hom}_\mathcal{A}(K^n, K)\rightarrow L$ of $\Gamma(K)$-modules. For example, we may take $n$ to be the length of ${\rm Ext}_\mathcal{A}^1(Y, K)$ as a $k$-module.

 Take any right minimal epimorphism $\alpha\colon X\rightarrow Y$ that is right $C$-determined; in particular, ${\rm Ker}\; \alpha\in {\rm add}\; K$. Then the corresponding exact sequence $\xi_\alpha \colon 0\rightarrow {\rm Ker}\; \alpha\rightarrow X\stackrel{\alpha}\rightarrow Y\rightarrow 0$ is a universal $L$-extension for some $L$; see Proposition \ref{prop:uni-ext}(1). Thus the connecting map $c(\xi_\alpha, K)\colon {\rm Hom}_\mathcal{A}({\rm Ker}\; \alpha, K)\rightarrow L$ is a projective cover. It follows that ${\rm Ker}\;\alpha$ is a direct summand of $K^n$. Using the universal property of the (semi-)universal extension, we have the following commutative diagram; compare  Lemma \ref{lem:factor}(2) and (3).
\[\xymatrix{
0 \ar[r] & \bar{K} \ar@{.>}[d] \ar[r] & \bar{X} \ar@{.>}[d] \ar[r] & Y\ar[r] \ar@{=}[d] & 0\\
0 \ar[r] & {\rm Ker}\; \alpha \ar[r] & X \ar[r]^\alpha & Y \ar[r] & 0
}\]
The left commutative square yields the required exact sequence.
\end{proof}

Recall that $K, Y$ are objects in $\mathcal{A}$ and $C=\tau^{-1}K$.  Consider $K'\in {\rm add}\; K$ and $C'=\tau^{-1}K'$. By Proposition \ref{prop:C-deter} the following map is well defined
$$\kappa\colon{\rm Ext}_\mathcal{A}^1(Y, K')\longrightarrow {^C[\longrightarrow Y\rangle_{\rm epi}}$$
such that $\kappa([\xi])=[\alpha\rangle$ for any exact sequence $\xi\colon 0\rightarrow K'\rightarrow X\stackrel{\alpha}\rightarrow Y\rightarrow 0$.

The following consideration extends slightly the one in \cite[Section 10]{Rin2}. We consider the following composite
$$ F \colon D\underline{\rm Hom}_\mathcal{A}(C', Y)\stackrel{D\Psi_{Y, K'}}\longrightarrow {\rm Ext}_\mathcal{A}^1(Y, K')\stackrel{\kappa}\longrightarrow {^C[\longrightarrow Y\rangle_{\rm epi}}\stackrel{\eta_{C, Y}}\longrightarrow {\rm sub}_{\Gamma(C)^{\rm op}}\; \underline{\rm Hom}_\mathcal{A}(C, Y).$$

We have the following characterization of the map $F$. Here, we recall that elements of $D\underline{\rm Hom}_\mathcal{A}(C', Y)$ are $k$-linear maps $\underline{\rm Hom}_\mathcal{A}(C', Y)\rightarrow E$, where $E$ is the minimal injective cogenerator of $k$.

\begin{prop}\label{prop:Ringel}
For any $k$-linear map $\theta\in D\underline{\rm Hom}_\mathcal{A}(C', Y)$, we have
$$F(\theta)=\{f\in  \underline{\rm Hom}_\mathcal{A}(C, Y)\; |\; \theta(f\circ g)=0 \mbox{ for any morphism } g\colon C'\rightarrow C\}.$$
\end{prop}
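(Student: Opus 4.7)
The plan is to unravel $F(\theta)$ through the long exact sequence attached to the extension that $\theta$ produces, and then apply the naturality of $\Psi$ in its first variable to translate the resulting vanishing back to a statement about $\theta$.

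Set $[\xi]=D\Psi_{Y,K'}(\theta)\in{\rm Ext}^1_\mathcal{A}(Y,K')$ and choose a representative $\xi\colon 0\to K'\to X\stackrel{\alpha}{\rightarrow} Y\to 0$, so that $\kappa([\xi])=[\alpha\rangle$ and $F(\theta)=\eta_{C,Y}([\alpha\rangle)={\rm Im}\,\underline{\rm Hom}_\mathcal{A}(C,\alpha)$. Applying ${\rm Hom}_\mathcal{A}(C,-)$ to $\xi$ and quotienting by $\mathcal{P}(C,Y)$ (which already lies in ${\rm Im}\,{\rm Hom}_\mathcal{A}(C,\alpha)$) identifies this image with ${\rm Ker}\,c(C,\xi)$, where $c(C,\xi)\colon\underline{\rm Hom}_\mathcal{A}(C,Y)\to{\rm Ext}^1_\mathcal{A}(C,K')$ sends $f\mapsto[\xi.f]$. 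Thus $f\in F(\theta)$ iff $[\xi.f]=0$, equivalently, since $E$ is an injective cogenerator of $k$, iff $\omega([\xi.f])=0$ for every $\omega\in D{\rm Ext}^1_\mathcal{A}(C,K')$.

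Using the defining identity $[\xi]=D\Psi_{Y,K'}(\theta)$ under the canonical identification ${\rm Ext}^1_\mathcal{A}(Y,K')\cong DD\,{\rm Ext}^1_\mathcal{A}(Y,K')$, for any $f\colon C\to Y$ and $\omega$ as above one has
\[
\omega([\xi.f])=\omega(f^*[\xi])=\bigl(D(f^*)(\omega)\bigr)([\xi])=\theta\bigl(\Psi_{Y,K'}(D(f^*)(\omega))\bigr).
\]
The naturality of $\Psi$ in its first variable (contravariant on the ${\rm Ext}^1$ side through $f^*$, covariant on the $\underline{\rm Hom}$ side through post-composition with $f$) gives the identity
\[
\Psi_{Y,K'}\circ D(f^*)=(f\circ-)\circ\Psi_{C,K'}\colon D{\rm Ext}^1_\mathcal{A}(C,K')\longrightarrow\underline{\rm Hom}_\mathcal{A}(C',Y),
\]
so the condition becomes $\theta\bigl(f\circ\Psi_{C,K'}(\omega)\bigr)=0$ for every $\omega\in D{\rm Ext}^1_\mathcal{A}(C,K')$.

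Finally, $\Psi_{C,K'}$ is a bijection onto $\underline{\rm Hom}_\mathcal{A}(C',C)$, and since $\mathcal{P}$ is an ideal, the value $\theta(f\circ g)$ depends only on the class of $g$ in $\underline{\rm Hom}_\mathcal{A}(C',C)$; hence one may equivalently quantify over all morphisms $g\colon C'\to C$ in $\mathcal{A}$, yielding the asserted description of $F(\theta)$. The delicate step is the naturality square for $\Psi$ with the variance above; this is a standard feature of the Auslander--Reiten duality that can be read off from the construction in \cite[Section 3]{LZ}, and an analogous compatibility on the second variable is already invoked in Lemma \ref{lem:compat}.
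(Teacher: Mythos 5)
Your proof is correct, and it takes a genuinely different route from the paper's. The paper stays inside the bijection triangle: it writes $F(\theta)=\gamma_{K,Y}({\rm Im}\;c(\xi,K))$, realizes this as the kernel of a map $\underline{\rm Hom}_\mathcal{A}(C,Y)\rightarrow D\,{\rm Im}\;c(\xi,K)$ coming from Lemma \ref{lem:AR-bij}, and then identifies ${\rm Im}\;c(\xi,K)$ under $D\Psi_{Y,K}$ with an auxiliary submodule $S\subseteq D\underline{\rm Hom}_\mathcal{A}(C,Y)$ of functionals $f\mapsto\theta(f\circ g)$, using the naturality of $\Psi$ in the second ($K$-)variable. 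You bypass $\gamma_{K,Y}$ and $\delta_{K,Y}$ altogether: you compute $\eta_{C,Y}([\alpha\rangle)$ directly as ${\rm Ker}\;c(C,\xi)$ from the long exact sequence, test the vanishing of $[\xi.f]$ against all $\omega\in D{\rm Ext}^1_\mathcal{A}(C,K')$ via the cogenerator $E$, and use the naturality of $\Psi$ in the first variable --- the same naturality square the paper invokes in the proof of Proposition \ref{prop:C-deter} --- to convert $\omega([\xi.f])$ into $\theta\bigl(f\circ\Psi_{C,K'}(\omega)\bigr)$, after which surjectivity of $\Psi_{C,K'}$ onto $\underline{\rm Hom}_\mathcal{A}(C',C)$ and the ideal property of $\mathcal{P}$ finish the argument. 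Your route is more self-contained (it re-derives the relevant fragment of the commutativity rather than quoting the triangle) and avoids the intermediate module $S$; the paper's route has the advantage of exhibiting $F(\theta)$ explicitly as a value of $\gamma_{K,Y}$, tying the statement to the structure of the bijection triangle. The only step that genuinely needs the construction of \cite{LZ} is the first-variable naturality of $\Psi$, and you have correctly isolated and justified it, so there is no gap.
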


\begin{proof}
Assume that $(D\Psi_{Y, K'})(\theta)=[\xi]$, where $\xi\colon 0\rightarrow K'\rightarrow X\stackrel{\alpha}\rightarrow Y\rightarrow 0$ is an exact sequence. Consider the composite
$$\delta_{K, Y}\circ \kappa\colon {\rm Ext}_\mathcal{A}^1(Y, K)\longrightarrow {\rm sub}_{\Gamma(K)}\; {\rm Ext}_\mathcal{A}^1(Y, K).$$ We claim that $\delta_{K, Y}\circ \kappa ([\xi])={\rm Im}\; c(\xi, K)$. Indeed, $\kappa([\xi])=[\alpha\rangle$, and thus $\delta_{K, Y}([\alpha\rangle)={\rm Im}\; c(\xi_\alpha, K)={\rm Im}\; c(\xi, K)$.

By the bijection triangle (\ref{equ:bij-tri}), we have $F(\theta)=\gamma_{K, Y}({\rm Im}\; c (\xi, K))=H$, which we will compute.  Recall from Lemma \ref{lem:AR-bij} the following exact sequence
\begin{align*}
0\longrightarrow H\stackrel{\rm inc}\longrightarrow \underline{\rm Hom}_\mathcal{A}(C, Y)\stackrel{\psi}\longrightarrow D{\rm Im}\; c(\xi, K)\longrightarrow 0,
\end{align*}
where ``${\rm inc}$" denotes the inclusion and $\psi=D({\rm inc}')\circ (\Psi_{Y, K})^{-1}$. Here, ${\rm inc}'\colon {\rm Im}\; c(\xi, K) \rightarrow  {\rm Ext}_\mathcal{A}^1(Y, K)$ is the inclusion.

Consider the following $k$-module
\begin{align*}S=\{\theta'\in D\underline{\rm Hom}_\mathcal{A}(C, Y)\; |\; \mbox{there is a  morphism }  & g\colon C'\rightarrow C \mbox{ with } \\ \theta'(f)&=\theta(f\circ g) \mbox{ for each } f\colon C\rightarrow Y\},\end{align*}
and the corresponding inclusion  ${\rm inc}''\colon S\rightarrow D\underline{\rm Hom}_\mathcal{A}(C, Y)$.

Recall that ${\rm Im}\; c(\xi, K)=\{[u.\xi]\in {\rm Ext}_\mathcal{A}^1(Y, K)\; |\; u\colon K'\rightarrow K\}$. We apply the naturalness of $\Psi$ on the second variable and the equivalence $\tau^{-1}$. Then the isomorphism $D\Psi_{Y, K}$ identifies $S$ with ${\rm Im}\; c(\xi, K)$.  Hence, we have the following commutative diagram
\[\xymatrix{
\underline{\rm Hom}_\mathcal{A}(C, Y)\ar[d]^{\Psi_{Y, K}^{-1}} \ar[rr]^{D{\rm inc}''}&& DS \ar[d]^{\Psi_{K, Y}^{-1}}\\
D{\rm Ext}^1_\mathcal{A}(Y, K)\ar[rr]^{D{\rm inc}'} && D{\rm Im}\; c(\xi, K).
}\]
It follows that  $H$ coincides with the kernel of  $D{\rm inc}'' \colon \underline{\rm Hom}_\mathcal{A}(C, Y) \rightarrow DS$. We observe that $D{\rm inc}''$  sends $f\colon C\rightarrow  Y$ to a $k$-linear map on $S$, which sends $\theta'\in S$ to $\theta'(f)$. Hence, $f$ lies in $H$ if and only if $\theta'(f)=0$ for each $\theta'\in S$, if and only if $\theta(f\circ g)=0$ for each morphism $g \colon C'\rightarrow C$. This is the required  characterization of $F(\theta)=H$.
\end{proof}

We have the following immediate consequence, which  is conjectured by Ringel in \cite[Section 10]{Rin2}.

\begin{cor}\label{cor:Ringel}
Consider the case that $K'=K$ and thus $C'=C$. Then for any $k$-linear map $\theta\in D\underline{\rm Hom}_\mathcal{A}(C, Y)$, we have
$$F(\theta)=\{f\in  \underline{\rm Hom}_\mathcal{A}(C, Y)\; |\; \theta(f\circ g)=0 \mbox{ for any } g\in \Gamma(C)\},$$
which is the largest $\Gamma(C)^{\rm op}$-submodule  of $\underline{\rm Hom}_\mathcal{A}(C, Y)$ contained in ${\rm Ker}\; \theta$.
\end{cor}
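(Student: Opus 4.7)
The plan is to obtain the first identity by direct specialization of Proposition \ref{prop:Ringel}, and then to deduce the characterization of the resulting set as the largest $\Gamma(C)^{\rm op}$-submodule of $\underline{\rm Hom}_\mathcal{A}(C, Y)$ contained in ${\rm Ker}\; \theta$ by a short closure argument.

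First, I would substitute $K' = K$ into Proposition \ref{prop:Ringel}, which forces $C' = \tau^{-1}K' = C$. In this case, a morphism $g\colon C' \to C$ is exactly an element of $\Gamma(C) = {\rm End}_\mathcal{A}(C)$, so the claimed description of $F(\theta)$ is read off verbatim from that proposition.

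Next, denote the set on the right by $N_\theta$. I would verify three elementary points. (a) $N_\theta \subseteq {\rm Ker}\; \theta$: evaluate the defining condition at $g = {\rm Id}_C$. (b) $N_\theta$ is a $\Gamma(C)^{\rm op}$-submodule: the right $\Gamma(C)$-action on $\underline{\rm Hom}_\mathcal{A}(C, Y)$ is precomposition $f\cdot h = f\circ h$, and for $f \in N_\theta$ and $h, g \in \Gamma(C)$ one has $\theta((f\circ h)\circ g) = \theta(f\circ(h\circ g)) = 0$ since $h\circ g$ still lies in $\Gamma(C)$. (c) Maximality: for any $\Gamma(C)^{\rm op}$-submodule $M$ of $\underline{\rm Hom}_\mathcal{A}(C, Y)$ contained in ${\rm Ker}\; \theta$ and any $f \in M$, the element $f\circ g$ belongs to $M \subseteq {\rm Ker}\; \theta$ for every $g \in \Gamma(C)$, whence $f \in N_\theta$ and $M \subseteq N_\theta$.

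There is essentially no obstacle: the corollary is a direct specialization of Proposition \ref{prop:Ringel} combined with the general, and entirely formal, observation that the largest submodule of a right module contained in a fixed $k$-subspace $V$ is exactly the set of elements whose full orbit under the action lies in $V$.
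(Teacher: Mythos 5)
Your proposal is correct and follows exactly the paper's own route: the first identity is the specialization of Proposition \ref{prop:Ringel} to $K'=K$, and the maximality claim is the elementary closure verification that the paper dismisses as ``direct to verify.'' Your points (a)--(c) simply make that verification explicit, which is fine.
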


\begin{proof}
The first statement is a special case of Proposition \ref{prop:Ringel}, and the second statement is direct to verify.
\end{proof}

\vskip 5pt

\noindent{\bf Acknowledgements} \quad The author is very grateful to the two referees for many helpful suggestions, and thanks Pengjie Jiao for helpful discussion. This work is supported by National Natural Science Foundation of China (No.s 11522113 and 11671245), and the Fundamental Research Funds for the Central Universities.

\bibliography{}

\vskip 10pt

 {\footnotesize \noindent Xiao-Wu Chen \\
 Key Laboratory of Wu Wen-Tsun Mathematics, Chinese Academy of Sciences,\\
School of Mathematical Sciences, University of Science and Technology of China,\\
No. 96 Jinzhai Road, Hefei, 230026, Anhui, P.R. China.\\
E-mail: xwchen@mail.ustc.edu.cn, URL: http://home.ustc.edu.cn/$^\sim$xwchen}

\end{document}